\documentclass[a4paper,11pt,leqno,english]{smfart}
\usepackage{aeguill}
\usepackage{enumerate}
\usepackage{amssymb,amsmath,latexsym,amsthm}
\usepackage[T1]{fontenc}
\usepackage{smfthm}      
\usepackage{geometry}   
\usepackage{url}      
\usepackage[frenchb, english]{babel}
\usepackage[utf8]{inputenc}   
\usepackage{mathrsfs}
\usepackage{relsize}
\usepackage{xcolor}
\usepackage{comment}
\definecolor{violet}{rgb}{0.0,0.2,0.7}
\definecolor{rouge2}{rgb}{0.8,0.0,0.2}
\usepackage{hyperref}
\usepackage{mathrsfs}
\hypersetup{
    bookmarks=true,         
    unicode=false,          
    pdftoolbar=true,        
    pdfmenubar=true,        
    pdffitwindow=false,     
    pdfstartview={FitH},    
    pdftitle={},    
    pdfauthor={},     
    colorlinks=true,       
   linkcolor=violet,          
    citecolor=violet,        
    filecolor=black,      
    urlcolor=cyan}           
\setcounter{tocdepth}{2}

\newcommand{\R}{\mathbb{R}}
\newcommand{\CC}{\mathbb{C}}
\newcommand{\Q}{\mathbb{Q}}

\newcommand{\N}{\mathbb{N}}

\newcommand{\D}{\mathbb D}

\renewcommand{\b}{\bar}
\renewcommand{\d}{\partial}

\newcommand{\vp}{\varphi}

\renewcommand{\O}{\mathcal{O}}

\newcommand{\ep}{\varepsilon}

\newcommand{\la}{\langle}

\newcommand{\ra}{\rangle}

\renewcommand{\ge}{\geqslant}
\renewcommand{\le}{\leqslant}
\renewcommand{\geq}{\geqslant}
\renewcommand{\leq}{\leqslant}
\newcommand{\Ric}{\mathrm{Ric} \,}
\newcommand{\om}{\omega}
\newcommand{\omvp}{\omega_{\varphi}}
\newcommand{\omvpe}{\omega_{\vpe}}

\newcommand{\vpe}{\varphi_{\varepsilon}}
\newcommand{\pse}{\psi_{\varepsilon}}
\newcommand{\Pse}{\Psi_{\varepsilon}}
\newcommand{\phe}{\phi_{\varepsilon}}
\newcommand{\ue}{u_{\varepsilon}}

\renewcommand{\D}{\mathbb D}
\newcommand{\Dlc}{D_{\rm lc}}
\newcommand{\Dklt}{D_{\rm klt}}

\newcommand{\ome}{\om_{\varepsilon}}

\newcommand{\omc}{\omega_{\beta}}
\newcommand{\omb}{\widetilde{\omega}_{\beta}}

\newcommand{\Supp}{\mathrm {Supp}}

\newcommand{\DD}{\mathbb{D}^n\setminus \Delta}
\newcommand{\chid}{\chi_{\delta}}
\newcommand{\uep}{u_{\ep}}
\newcommand{\db}{\partial^{\beta}}
\newcommand{\dbb}{\bar \partial^{\beta}}
\newcommand{\dbj}{\partial_k^{\beta}}
\newcommand{\dbkb}{\partial_{\bar k}^{\beta}}

\newcommand{\cab}{\mathscr{C}^{\alpha, \beta}}
\newcommand{\cdab}{\mathscr{C}^{2,\alpha, \beta}}
\newcommand{\ib}{\int_{B(r)}}

\newcommand{\tr}{\mathrm{tr}}

\newcommand{\pp}{\psi^{+}}
\newcommand{\psm}{\psi^{-}}
\newcommand{\ppm}{\psi^{\pm}}

\newcommand{\iddb}{dd^c}
\newcommand{\ddc}{dd^c}

\newcommand{\fe}{F_{\varepsilon}}

\newcommand{\fepp}{(\varepsilon^2+|z^p|^2)}
\newcommand{\fepq}{(\varepsilon^2+|z^q|^2)}
\newcommand{\fepr}{(\varepsilon^2+|z^r|^2)}
\newcommand{\feps}{(\varepsilon^2+|z^s|^2)}
\newcommand{\fepu}{(\varepsilon^2+|z^u|^2)}
\newcommand{\fept}{(\varepsilon^2+|z^t|^2)}

\newcommand{\dej}{\beta_k}

\renewcommand{\b}{\bar}
\renewcommand{\om}{\omega}

\newcommand\cO{{\mathcal{O}}}

\def\ddbar{\partial\overline\partial}
\let\ol=\overline

\newtheorem*{cor}{Corollary}
\newtheorem*{thma}{Theorem A}
\newtheorem*{thmb}{Theorem B}

\setcounter{tocdepth}{1}
\numberwithin{equation}{section}


\begin{document}

\frontmatter 

\title[Conic singularities metrics with prescribed Ricci curvature]{Conic singularities metrics with prescribed Ricci curvature: general cone angles along \\ normal crossing divisors}

\date{\today}
\author{Henri Guenancia}
\address{Institut de Mathématiques de Jussieu \\
Université Pierre et Marie Curie \\
 Paris 
\& Département de Mathématiques et Applications \\
\'Ecole Normale Supérieure \\
Paris}
\email{guenancia@math.jussieu.fr}
\urladdr{www.math.jussieu.fr/~guenancia}

\author{Mihai P\u{a}un}
\address{Korea Institute for Advanced Study\\
School of Mathematics, 85 Hoegiro, Dongdaemun-gu, Seoul 130-722, Korea}
\email{paun@kias.re.kr}


\thanks{H.G. is partially supported by the French A.N.R project MACK}
\subjclass{32Q05, 32Q10, 32Q15, 32Q20, 32U05, 32U15}
\keywords{Kähler-Einstein metrics, conic singularities, orbifold tensors, Monge-Ampère equations, klt pairs}

\begin{abstract} 
Let $X$ be a non-singular compact K\"ahler manifold, endowed with an effective 
divisor $D= \sum (1-\beta_k) Y_k$ having simple normal crossing support, and satisfying $\beta_k \in (0,1)$. 
The natural objects one has to consider in order 
to explore the differential-geometric properties of the pair $(X, D)$
are the so-called metrics with conic singularities. In this article, we complete our earlier work \cite{CGP} concerning the Monge-Amp\`ere equations on $(X, D)$ by establishing 
Laplacian and ${\mathscr C}^{2,\alpha, \beta}$ estimates for the solution of this equations regardless to the size of the coefficients $0<\beta_k< 1$. 
In particular, we obtain a general theorem concerning the existence and regularity of Kähler-Einstein metrics with conic singularities along a normal crossing divisor.
\end{abstract}

\maketitle

\tableofcontents

\mainmatter

\section{Introduction}

Let $(X,D)$ be a \textit{log smooth klt pair}, i.e. $X$ is a compact Kähler manifold, and $D= \sum (1-\beta_k)Y_k$ is a $\R$-divisor with simple normal crossing support such that $\beta_k\in (0,1)$ for all $k$.

Given this geometric data, the notion corresponding to a K\"ahler metric in the case $D=0$ 
is Kähler metric with \textit{conic singularities}. For our purposes in this paper, such 
an object is a Kähler metric $\om$ on $X\setminus(\cup Y_k)$ which is quasi-isometric to the model metric with conic singularities: more precisely, near each point $p\in \Supp(D)$ where $\Supp(D)$ is defined by the equation $(z_1 \cdots z_d=0)$ for some holomorphic system of coordinates $(z_i)$, we want $\om$ to satisfy
\[C^{-1} \om_{\rm cone} \le \om \le C \om_{\rm cone}\]
for some constant $C>0$, and where 
\[\om_{\rm cone}:=\sum_{k=1}^d \frac{1}{|z_k|^{2(1-\beta_k)}}\sqrt{-1}dz_k\wedge d\bar z_k +\sum_{k=d+1}^n \sqrt{-1}dz_k\wedge d\bar z_k \] 
is the model cone metric with cone angles $2\pi \beta_k$ along $(z_k=0)$.

\noindent
Given a log smooth klt pair $(X,D)$, a natural question to ask is whether one can find a \textit{Kähler-Einstein} metric $\om$ on $X\setminus \Supp(D)$ (i.e. satisfying on this open subset $\Ric \om = \lambda \om$ for some $\lambda \in \R$) having conic singularities along $D$. Such a metric will be refered to as a \textit{conic Kähler-Einstein metric}.\\

In this paper, we provide a complete (positive) answer to this question (Theorem A) and we also derive finer regularity estimates for these metrics by proving a conic analogue of Evans-Krylov theorem for complex Monge-Ampère equations (Theorem B).

Actually, the results we obtain are very similar to the classical case $D=0$.
We remark that in order to expect the pair $(X, D)$ to admit a Kähler-Einstein conic metric 
the necessary condition is  
an appropriate positivity property for the adjoint $\R$-line bundle $K_X+D$. When this
requirement is fulfilled, one can show that any such metric is necessarily the solution of a \textit{global} Monge-Ampère equation of the form
\[(\om+\ddc \vp)^n = \frac{e^{\mu \vp}dV}{\prod |s_k|^{2(1-\beta_k)}} \leqno{\rm (MA)}\] 
where $\om$ is a background Kähler metric on $X$, $\mu \in \R$ is a parameter which could be related to the sign of the curvature, $dV$ is some suitable smooth volume form on $X$, and $s_k$ are sections of $\cO(Y_k)$ defining the hypersurface $Y_k$; finally, $\vp$ is a \textit{bounded} $\om$-psh function.

\noindent
If $dV$ is chosen according to the cohomological positivity properties of $K_X+ D$, then a solution $\omvp:=\om+\ddc \vp$ of (MA) satisfies \[\Ric \omvp = -\mu \omvp + [D] \leqno {\mathrm{(KE)}}\] where $\Ric \omvp := -\ddc \log \omvp^n$ (it is automatically well-defined as a current). Note that such equations with meromorphic right hand side were first considered and solved (under some assumptions) by Yau, cf \cite[\S 8]{Yau78}.\\

\noindent
Hence in order to construct Kähler-Einstein conic metrics a first step would be to solve the equation (MA). We remark that it is a priori not clear that a solution of (MA) will have conic singularities along 
$D$ \--- even if by the general theory the function $\varphi$ is smooth outside of the support of the divisor \---. Indeed, the equations (KE) or (MA) only impose the behavior of the \emph{determinant} of the metric $\omvp$ whereas having ``conic singularities" is a much more precise information about the metric itself. Nevertheless, we have the following statement. 

\begin{thma}
Let $(X,D)$ be a log smooth klt pair with $D= \sum (1-\beta_k)[s_k=0]$. Let $\om$ be a Kähler metric on $X$, $dV$ a smooth volume form, and let $\mu \in \R$. Then any weak solution $\omvp=\om + \ddc \vp$ with $\vp \in L^{\infty}(X)$ of 
\[(\om+\ddc \vp)^n = \frac{e^{\mu \vp}dV}{\prod |s_k|^{2(1-\beta_k)}} \] 
has conic singularities along $D$. \\
\end{thma}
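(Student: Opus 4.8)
The plan is to work locally near a point $p \in \Supp(D)$ where $D$ is given by $(z_1\cdots z_d = 0)$, and to reduce the statement to an a priori estimate on an approximate equation. Since $\vp$ is known to be smooth on $X\setminus\Supp(D)$ by standard regularity theory, the only issue is controlling the metric $\omvp$ transversally to the divisor. I would first rewrite (MA) in the model polydisc $\D^n$ as a Monge--Ampère equation with a right-hand side of the form $e^{\mu\vp}f\prod_{k\le d}|z_k|^{2(\beta_k-1)}$ with $f$ smooth positive, and compare $\omvp$ to the model cone metric $\om_{\rm cone}$, or rather to a smooth reference conic metric $\om_D$ in the same class whose potential involves $\sum_{k\le d}|z_k|^{2\beta_k}$. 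The goal is an upper and lower bound $C^{-1}\om_D \le \omvp \le C\, \om_D$ off the divisor.

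The key steps, in order: (1) Regularize the conic equation by replacing $|z_k|^{2(1-\beta_k)}$ with $(\varepsilon^2 + |z_k|^2)^{1-\beta_k}$ (or the weight with the potentials $\vp_i$ as in the \verb|\epi| notation of the paper), obtaining smooth Monge--Ampère equations whose solutions $\vpe$ converge to $\vp$; this is exactly the approximation scheme set up in \cite{CGP}. (2) Establish a Laplacian estimate: bound $\tr_{\omega_{D,\varepsilon}}\omega_{\vpe}$ from above, uniformly in $\varepsilon$, via a Chern--Lu / Aubin--Yau inequality computation, using that the reference conic metric $\omega_{D,\varepsilon}$ has bisectional curvature bounded above in the relevant directions (this is where the normal-crossing, rather than merely snc-with-small-angles, hypothesis matters and where the paper's main technical input enters). (3) Combine the Laplacian bound with the equation to get the reverse inequality $\tr_{\omega_{\vpe}}\omega_{D,\varepsilon}$ bounded, hence two-sided equivalence $C^{-1}\omega_{D,\varepsilon}\le\omega_{\vpe}\le C\,\omega_{D,\varepsilon}$ with $C$ independent of $\varepsilon$. (4) Pass to the limit $\varepsilon \to 0$ on compact subsets of $X\setminus\Supp(D)$ to conclude that $\omvp$ is quasi-isometric to $\om_{\rm cone}$, i.e. has conic singularities along $D$.

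The main obstacle I expect is step (2): proving a \emph{uniform} (in $\varepsilon$) upper bound for the Laplacian $\tr_{\omega_{D,\varepsilon}}\omega_{\vpe}$ when the cone angles $\beta_k$ are allowed to be arbitrary in $(0,1)$ — in particular $\ge 1/2$ — so that one cannot use the convexity trick that makes the small-angle case in \cite{CGP} work. The difficulty is that the regularized reference metric $\omega_{D,\varepsilon}$ does not have a uniform upper bound on its holomorphic bisectional curvature as $\varepsilon\to 0$; the curvature blows up near the divisor, and the bad terms must be absorbed. The resolution should be to choose the auxiliary function in the maximum principle argument cleverly — adding a correction term built from $\log$ of the regularized weights (or from $\sum(\varepsilon^2+|z_k|^2)^{\beta_k}$) so that the negative contribution of its complex Hessian cancels the unbounded positive curvature terms — and to exploit the precise structure of the Monge--Ampère right-hand side, which carries a compensating singularity. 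A secondary difficulty is that the estimate must be carried out in the singular (quasi-coordinate / conic Hölder) framework rather than on a fixed compact manifold, so one has to be careful that all constants depend only on geometric data of $(X,D,\om)$ and on $\|\vp\|_{L^\infty}$, not on the point or on $\varepsilon$.
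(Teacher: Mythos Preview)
Your overall strategy matches the paper's almost exactly: regularize the right-hand side by $(\varepsilon^2+|s_k|^2)^{1-\beta_k}$, obtain uniform $L^\infty$ bounds on $\vpe$ (Ko\l odziej; with a separate treatment via Demailly regularization of $\vp$ when $\mu<0$, a point you do not mention), prove a uniform Laplacian estimate comparing $\omega_{\vpe}$ to the regularized reference cone metric $\ome$, and pass to the limit. The paper packages step~(2) as a clean abstract Laplacian estimate in which the usual curvature hypothesis is weakened to
\[
i\Theta_{\omega}(T_X)\;\ge\; -\bigl(C\omega + dd^c\Psi\bigr)\otimes\mathrm{Id}
\]
for some \emph{bounded} $\Psi$ with $dd^c\Psi\ge -C\omega$; the auxiliary function is $\Psi_\varepsilon = C\sum_k\chi_\rho(\varepsilon^2+|s_k|^2)$ for a small exponent $\rho$, exactly in the spirit of what you suggest.

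One systematic sign confusion to flag, since it would derail the computation if left uncorrected: in the Aubin--Yau/Siu inequality for $\tr_{\ome}\omega_{\vpe}$ one needs a \emph{lower} bound on the holomorphic bisectional curvature of the reference metric $\ome$, not an upper bound; the obstruction is that this curvature is unbounded from \emph{below} as $\varepsilon\to 0$ (the terms behave like $-(\varepsilon^2+|z_k|^2)^{-1}$). Correspondingly, the role of $\Psi_\varepsilon$ is that its $dd^c$ is large and \emph{positive} near the divisor, so that $C\ome+dd^c\Psi_\varepsilon$ dominates the negative curvature terms. Your description (``upper bound'', ``unbounded positive curvature'', ``negative contribution of its Hessian'') has all three signs reversed.
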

\noindent
This result indicates that the restriction of the solution $\om + \ddc \vp$ to any coordinate set
has the same singularities as the local
model metric $\om_{\rm cone}$. 

As a conclusion, in order to construct Kähler-Einstein conic metrics, it is enough to produce weak solutions of (MA). In the case of non-positive curvature, this is essentially a consequence of S.~Kolodziej's theorem \cite{Kolo}. In the positively curved case however, such metrics do not always exist, but there is a criterion involving the properness of the log-Mabuchi functional guaranteeing its existence (see \cite{rber} or \cite{BBEGZ} for a generalization to the general setting of (singular) log Fano varieties); cf also \cite{JMR} for the existence of positively curved conic KE metric under that properness assumption, $D$ being smooth. 

\begin{cor}
Let $(X, D)$ be a log smooth klt pair. 
\begin{enumerate}
\item[$(i)$] If $K_X+D$ is ample, then there exists a unique conic Kähler-Einstein metric with negative curvature.
\item[$(ii)$] If $K_X+D$ is numerically trivial, then there exists in each Kähler class a unique conic Ricci-flat metric.
\item[$(iii)$] If $-(K_X+D)$ is ample and the log-Mabuchi functional is proper, then there exists a unique conic Kähler-Einstein metric with positive curvature.
\end{enumerate}
\end{cor}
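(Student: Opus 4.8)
The plan is to deduce the three statements from Theorem~A combined with the known existence theory for the Monge--Amp\`ere equation (MA): in each case one chooses the data $(\om,\mu,dV)$ so that a bounded $\om$-psh solution of (MA) automatically satisfies the Einstein equation (KE) on $X\setminus\Supp(D)$, and then Theorem~A upgrades such a solution to a genuine conic metric.

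\emph{Cohomological set-up.} First I would fix smooth Hermitian metrics $h_k$ on $\cO(Y_k)$, with Chern curvature forms $\theta_k\in c_1(\cO(Y_k))$, and put $|s_k|:=|s_k|_{h_k}$. By Poincar\'e--Lelong one has $\ddc\log|s_k|^2=-\theta_k$ away from $\Supp(D)$, so for any smooth volume form $dV$ the smooth $(1,1)$-form $\Ric(dV)-\sum_k(1-\beta_k)\theta_k$ represents $-c_1(K_X+D)$. Accordingly: in case $(i)$ take $\mu=1$ and $\om$ a Kähler metric in the ample class $c_1(K_X+D)$; in case $(ii)$ take $\mu=0$ and $\om$ an arbitrary Kähler metric in the prescribed class; in case $(iii)$ take $\mu=-1$ and $\om$ a Kähler metric in the ample class $-c_1(K_X+D)$. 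In all three cases $\mu[\om]=c_1(K_X+D)$, so by the $\ddc$-lemma one can pick $dV$ with
\[\Ric(dV)-\sum_k(1-\beta_k)\theta_k=-\mu\,\om .\]
With this normalization, any $\vp\in\psh(X,\om)\cap L^\infty(X)$ solving (MA) is smooth on $X\setminus\Supp(D)$ by interior elliptic regularity, and a direct computation of $-\ddc\log\omvp^n$ there yields $\Ric\omvp=-\mu\,\omvp$; globally $\Ric\omvp=-\mu\,\omvp+[D]$, which is (KE). Theorem~A then shows $\omvp$ has conic singularities along $D$, hence is a conic Kähler--Einstein metric with curvature of sign opposite to $\mu$. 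So everything reduces to the existence and uniqueness of a bounded solution of (MA).

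\emph{Existence of $\vp$.} Since every $\beta_k\in(0,1)$, the density $f:=\prod_k|s_k|^{-2(1-\beta_k)}$ lies in $L^p(X,dV)$ for some $p>1$. In case $(i)$, where $\mu>0$, the equation $\omvp^n=e^{\vp}f\,dV$ is solved by the Aubin--Yau continuity method together with Kolodziej's $L^\infty$ estimate (this is essentially carried out in \cite{CGP}), and uniqueness follows from the maximum principle applied to the difference of two solutions. In case $(ii)$, after rescaling $dV$ by a positive constant (which does not change $\Ric(dV)$) so that $\int_X f\,dV=\int_X\om^n$, Kolodziej's theorem provides a continuous solution of $\omvp^n=f\,dV$, unique up to an additive constant, whence $\omvp$ is unique in its Kähler class. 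In case $(iii)$, the properness of the log-Mabuchi functional is, by the variational approach of \cite{rber} (see also \cite{BBEGZ}), equivalent to the existence of a minimizer of that functional on the space of finite-energy $\om$-psh potentials, and such a minimizer solves $\omvp^n=e^{-\vp}f\,dV$; its $L^\infty$ bound (and its continuity off $\Supp(D)$) follows once more from Kolodziej-type estimates, valid because $f\in L^p$, $p>1$. Uniqueness in case $(iii)$ — up to the action of the automorphisms of $X$ preserving $D$ — follows from the Bando--Mabuchi argument in Berndtsson's form, adapted to the klt setting in \cite{BBEGZ}.

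\emph{Main difficulty.} The only genuinely non-formal input is, in case $(iii)$, the existence of a bounded solution under the properness hypothesis — but this is external, supplied by \cite{rber,BBEGZ}. From the perspective of the present paper the decisive new ingredient is Theorem~A: it is exactly what allows one to pass from the merely bounded $\om$-psh solutions furnished by pluripotential theory (smooth away from $D$, but a priori carrying no information on the nature of the singularities along $D$) to honest conic Kähler--Einstein metrics.
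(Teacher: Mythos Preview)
Your proposal is correct and follows essentially the same approach as the paper: the Corollary is deduced from Theorem~A by reducing to the existence of a bounded weak solution of (MA), which is supplied by Kolodziej's theorem in cases $(i)$ and $(ii)$ and by the variational results of \cite{rber,BBEGZ} under the properness assumption in case $(iii)$. Your cohomological set-up spelling out the choice of $dV$ via Poincar\'e--Lelong is exactly the computation the paper alludes to when it says ``if $dV$ is chosen according to the cohomological positivity properties of $K_X+D$, then a solution of (MA) satisfies (KE)''.
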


The previous result was obtained a few years ago by \cite{Brendle, CGP, JMR} independently under some various additional assumptions, and led to several further works \cite{LS, SW}. 
S.~Brendle assumed that the support of $D$ is smooth (i.e. is the union of \textit{disjoint} hypersurfaces), and it satisfies $\beta \le 1/2$; in \cite{JMR} the smoothness assumption on $D$ was present too, but they had no restriction concerning the coefficient $\beta$. And in our previous work \cite{CGP}, the above result was established under the assumption that $\beta_k \le 1/2$ for all $k$. 
We note that the condition above is automatically satisfied in the orbifold case, and that it needed in a crucial way so as to bound the holomorphic bisectional curvature of the cone metric outside the aforesaid hypersurface. However, as the spectacular results in \cite{CDS1, CDS2, CDS3}  and \cite{T} show, it is important to dispose of this kind of results in full generality i.e. without any restriction on the size of the coefficients.
Finally, let us mention two papers that appeared after the first version of this article was released: Yao \cite{Yao} gave a new approach to the Laplacian estimate (when $D$ is smooth) by localizing the problem and cleverly running a Moser iteration scheme; and around the same time, Datar-Song \cite{DSong} showed how to deduce the Laplacian estimate in the normal crossing case from the smooth case using a regularization argument.

In \cite{MR}, Mazzeo-Rubinstein announced the general case of a log smooth divisor; as far as we understand, the method they seem to use is very different from the one which will be presented here.\\

As a consequence of the Corollary above we establish the vanishing/parallelism of orbifold holomorphic tensors in the sense of Campana (cf. Theorem \ref{thm:van}). Actually, for this geometric 
application the quasi-isometry properties of conic singularities metrics established in Theorem A are sufficient, i.e. higher regularity of the metric (as in Theorem B) is not required. We remark that this is equally the case in many other contexts involving metrics with conic singularities
(e.g. the generic semi-positivity of the log cotangent bundle of the pairs $(X, D)$ with pseudo-effective canonical class, stability of the tangent bundle of singular varieties whose canonical bundle is ample...).

Next, we show that Theorem A can be extended to general klt pairs (Theorem \ref{thm:klt}) and to log smooth log canonical pairs (Theorem \ref{thm:lc}), in the spirit of \cite{G2, G12}.
More precisely, we prove that any Kähler-Einstein metric corresponding to a klt pair $(X,D)$ has conic singularities along $D$ on the so-called log smooth locus of the pair $(X,D)$, which is the Zariski open subset of $X$ consisting of points around which the pair is log smooth (i.e. $X$ is smooth and $D$ has simple normal crossing support). 

We also prove that any Kähler-Einstein metric on a log smooth log canonical pair $(X,D)$ (i.e. the same setting as Theorem A, but allowing some $\beta_j$'s to be zero) has mixed cone and cusp singularities along $D$.\\

Finally, we investigate in the last part of our paper the question of higher regularity for the potential $\vp$ solution of equation (MA). In \cite{Don}, Donaldson introduced Hölder spaces adapted to the conic setting (we refer to section \ref{sec:func} for the definitions of these spaces). 
From our point of view, it is more natural to work with functions whose higher regularity properties are 
modeled after the notions appearing in the theory of orbifolds. For example, one can define a ``co-tangent space"
$T^\star_X\langle B\rangle$ associated to the klt pair $(X, B)$, see e.g. \cite{CP}. In order to go on and construct the
\emph{exterior differential operator}, simple examples show that one has to allow forms whose coefficients can be expanded in Puiseux series (rather than Taylor) near the support of $B$. The regularity notions for these objects are defined in terms of local ramified coverings, and we introduce the spaces $\cab$ and $\cdab$ in a similar way, so as to have a certain
coherence between algebraic and differential geometry of $(X, B)$. 
\medskip

\noindent The following result, relying on Theorem A, should be viewed as a conic version of Evans-Krylov theorem for complex Monge-Ampère equations: 
\begin{thmb}
Let $(X,D)$ as in Theorem A, and let $\vp\in L^{\infty}(X)$ be any solution of 
\[(\om+\ddc \vp)^n = \frac{e^{\mu \vp}dV}{\prod |s_k|^{2(1-\beta_k)}} \] 
Then $\vp$ belongs to the class $\cdab$. 
\end{thmb}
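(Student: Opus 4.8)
The plan is as follows. Since $\vp$ is smooth outside $\Supp(D)$ by the interior regularity theory for non-degenerate complex Monge--Ampère equations, and the assertion is local, I would reduce to producing, near a point $p\in\Supp(D)$, a $\cdab$ bound for $\vp$ on $X\setminus\Supp(D)$ which is uniform up to $\Supp(D)$. Work in a coordinate polydisc around $p$ where $\Supp(D)=(z_1\cdots z_d=0)$, with the model cone metric $g^\beta$ (i.e. $\om_{\rm cone}$) and the conic operators $\partial^\beta,\bar\partial^\beta$ of section~\ref{sec:func}. The crucial input is Theorem A: the solution metric $\omvp$ has conic singularities along $D$, i.e. $C^{-1}\om_{\rm cone}\le\omvp\le C\,\om_{\rm cone}$; equivalently the conic Hessian $\big(\partial^\beta_j\bar\partial^\beta_k\vp\big)_{j,k}$ is bounded and uniformly positive with respect to the model, so the linearization of $(\mathrm{MA})$ is a \emph{uniformly elliptic} operator in the conic geometry. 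After subtracting a local potential of $\om_{\rm cone}$, equation $(\mathrm{MA})$ becomes, in conic coordinates, a model Monge--Ampère equation
\[\det\!\big(g^\beta_{j\bar k}+\partial^\beta_j\bar\partial^\beta_k u\big)=e^{F}\det\!\big(g^\beta_{j\bar k}\big),\]
with $g^\beta$ the model cone metric (flat away from the apex), $u=\vp+O(1)$ bounded with bounded conic Laplacian (again by Theorem A), and $F$ built out of $\mu\vp$, $dV$ and the weights $|s_k|^{2(1-\beta_k)}$. A first pass of De Giorgi--Nash--Moser theory for the conic Laplacian --- uniformly elliptic precisely because of Theorem A --- gives $u\in\cab$, hence $F\in\cab$, for every admissible exponent $\alpha$ (cf. section~\ref{sec:func}); this is the starting conic Hölder regularity.

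The core step is a conic version of the Evans--Krylov theorem: granting the conic Laplacian bound of Theorem A, the concavity of $M\mapsto\log\det M$, and $F\in\cab$, one should conclude that the conic Hessian $\big(\partial^\beta_j\bar\partial^\beta_k u\big)_{j,k}$ lies in $\cab$. I would follow the classical scheme: for a unit direction $e$ in the conic frame, the second conic derivative $w=\partial^\beta_e\bar\partial^\beta_e u$ is a subsolution of a linear equation uniformly elliptic with respect to $\om_{\rm cone}$, and concavity of $\log\det$ gives, on conic balls $B(r)$, an inequality of the form $\mathrm{osc}_{B(r/2)}\,w\le\theta\,\mathrm{osc}_{B(r)}\,w+C\,r^{\alpha}$ with $\theta<1$, obtained by applying a weak Harnack inequality to the nonnegative conic-superharmonic functions $\sup w-w$ and $w-\inf w$; iterating over dyadic conic balls yields the $\cab$ estimate for $\big(\partial^\beta_j\bar\partial^\beta_k u\big)_{j,k}$. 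Combined with the companion conic Schauder estimate for the conic gradient term, this gives a full $\cdab$ bound for $u$, hence for $\vp$, on compact subsets of $X\setminus\Supp(D)$, uniform as these exhaust $X\setminus\Supp(D)$ (equivalently, uniform along the approximation scheme of Theorem A); a continuity/density argument then upgrades it to $\vp\in\cdab(X)$.

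The hard part will be the uniformity of these estimates up to $\Supp(D)$ in the normal crossing case. On a single flat cone the estimates away from the apex are classical, but transporting them to the divisor forces one to use the scaling invariance $z_k\mapsto\lambda_k z_k$ of $\om_{\rm cone}$ and the homogeneity of the weights $|z_k|^{2\beta_k-2}$; when several components $Y_k$ meet at $p$, the off-diagonal blocks of the conic Hessian couple cones with different angles, so the weak Harnack inequality for the conic Laplacian and the oscillation-decay argument must be run in the mixed conic coordinates adapted to every stratum of $\Supp(D)$, with constants that do not degenerate along any of them. Establishing this uniform Krylov--Safonov/weak Harnack inequality for normal crossing conic Laplacians --- and matching its conclusion with the block structure (tangential--tangential, tangential--normal, normal--normal relative to the $Y_k$) built into Donaldson's definition of $\cdab$ --- is where the real work lies; granting it, the remainder is a transcription of the classical Evans--Krylov argument into the conic function spaces of section~\ref{sec:func}.
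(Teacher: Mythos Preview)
Your broad strategy—conic uniform ellipticity from Theorem~A, then a conic weak Harnack plus the concavity of $\log\det$ to run an Evans--Krylov oscillation decay—matches the paper's. But there is a genuine gap at the very step you flag as ``core'': differentiating the Monge--Amp\`ere equation twice ``in a conic direction'' to produce a subsolution $w=\partial^\beta_e\bar\partial^\beta_e u$. The frame vectors $\xi_k=|z_k|^{1-\beta_k}\partial/\partial z_k$ of section~\ref{sec:func} are not holomorphic, so they do not commute with the complex Hessian in the way Evans--Krylov needs; the holomorphic objects that \emph{do} commute, $z_k^{1-\beta_k}\partial/\partial z_k$, are multi-valued, so $w$ is not a function to which one can apply Harnack. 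The paper resolves this by first assuming the $\beta_k$ are rational, $\beta_k=p_k/q_k$, and pulling everything back by the branched cover $\pi(w)=(w_1^{q_1},\dots,w_d^{q_d},w_{d+1},\dots,w_n)$. Upstairs the twisted fields become the single-valued meromorphic $X_k=\frac{1}{q_k}w_k^{1-p_k}\partial/\partial w_k$, the equation takes the clean form $(i\partial^\beta\bar\partial^\beta u)^n=e^{\mu u+F}\,dV$, and the Evans--Krylov differentiation goes through verbatim. The general real-coefficient case is then obtained by approximation, using that the Laplacian and Harnack constants are uniform in $\beta$ (Remark~\ref{rem:unif} and Remark~\ref{rem:har}). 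Your proposal lacks any device playing this role.

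Two further points the paper handles that your outline glosses over. First, geodesic balls alone are insufficient: two points $x,y\in\D^n\setminus\Delta$ with $d_\beta(x,y)\gg d_\beta(x,\Delta)$ are not contained in any geodesic ball of comparable radius (Lemma~\ref{lem:inj}), so the oscillation decay must also be proved on balls \emph{centered on} $\Delta$; the paper establishes Sobolev (Proposition~\ref{sobolev2}), integration by parts (Proposition~\ref{ipp}), and Harnack (Theorem~\ref{harnack}) for both types of balls, with explicit volume-normalized constants that survive the limit $p_k\to\infty$ on the cover. Second, the usual Harnack proof uses the John--Nirenberg inequality, which is not obviously available in this conic/degenerate setting; the paper circumvents it via the argument of Han--Lin. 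Your sketch does not address either issue, and both are essential to get constants that are uniform up to $\Supp(D)$.
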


This kind of result has been already studied before in the particular case where $D$ is smooth. 
More precisely, Brendle proved it whenever $\beta \le 1/2$ by adapting the original $\mathscr C^3$ estimate of Aubin-Yau to the conic case; however this approach needs the curvature of the model metric to be bounded. In \cite{JMR}, the author's approach is using edge calculus, while in \cite{CDS2}  the argument is based on Schauder estimates, well adapted to their precise context.
\medskip
 
We propose here a new approach in the normal crossing setting, based on branched covers as in \cite{CP} so as to mimic Evans-Krylov theory in the non-degenerate case. However, several serious issues have to be addressed as we will briefly explain at the end of the following paragraph. 

\vspace{3mm}
\bigskip

\noindent
\textbf{Overview of the arguments.}

\noindent
We now discuss briefly the ideas in our proof of Theorem A. We will proceed as in \cite{CGP}: we regularize the equation $(\mathrm{MA})$ by introducing the following family of non-degenerate Monge-Amp\`ere equations:
\[\om_{\vpe}^n=\frac{e^{\mu\vpe}dV}{\prod_{j=1}^d(\ep^2+|s_j|^2)^{1-\beta_j}}\leqno \mathrm{(MA}_{\ep})\]
which has to be suitably normalized if $\mu=0$, and a bit modified if $\mu<0$ (cf \S \ref{sec:reg}).
Using a stability argument, we observe that the solution $\omvpe$ of this equation will converge to the initial solution $\omvp$ of (MA). 
Therefore, in order to achieve our goal, it would be enough to obtain uniform estimates for $\omvpe$ with respect to some approximation $\ome$ of a reference conic metric. It is important in the process that $\omega_{\varepsilon}:= \om+ \iddb \pse$ belongs to the fixed cohomology class $[\om]$; the explicit expression of $\psi_\ep$ is given in \S \ref{sec:reg2}.\\

The first step is to use the results of \cite{Kolo} to derive $\mathscr C^0$ estimates; this combined with standard results in the theory of Monge-Amp\`ere equations gives us \textit{interior} $\mathscr C^{2, \alpha}$ estimates \emph{provided that} global  
laplacian estimates have been already established. If we fulfill this program, then we can extract from $(\om_{\vpe})_{\ep>0}$ a subsequence converging to the desired solution $\omvp$, which will henceforth be smooth outside the support of $D$. \\

Next, we aim to compare $\omvp$ and $\om_{\rm cone}$; to this end we will show that 
$\displaystyle \omega_{\varphi_\ep}$ and $\ome$ are uniformly 
quasi-isometric (with respect to $\varepsilon$). It is important to realize that in our general situation and unlike in \cite{CGP}, the uniform lower bound on the holomorphic curvature of $\om_{\ep}$ \emph{does not holds in general}.
This quantity is usually needed in order to get the estimates. The new idea 
in this article is that by introducing a bounded function of type
$$C\sum_k| s_k|^{2\rho}$$
under the Laplacian $\displaystyle \Delta_{\omega_{\varphi_\varepsilon}}$ appearing in Siu-Yau's inequality, we are able to compensate the singularity arising from the curvature tensor, and proceed as in the classical case (see e.g. \cite[Proposition 2.1]{CGP}).
Here $C> 0$ will be a (large) positive constant, and $0<\rho< 1$ a (small) parameter to be chosen. 
We may add that a similar trick appears in \cite{Brendle} to deal with order three estimates, although in his case the curvature is
bounded and the situation is far less delicate.\\ Actually we will formulate here a general and intrinsic Laplacian estimate by replacing the usual lower bound hypothesis for the curvature tensor
with the condition that the said tensor is bounded from below by the $\ddc$ of a \textit{bounded} function, cf. Proposition \ref{prop:key}. Of course, the Hessian of the function we consider must be compatible with the
rest of the geometric data involved in the equation, but we will see that this can be achieved in the 
context of Theorem A.\\

As for Theorem B, our proof relies on an adaptation of Evans-Krylov theory to the conic setting.
Roughly speaking, we first treat the case of rational coefficients and then we obtain the general case by a limit process, using the uniformity of the estimates we establish in the rational setting. 

In order to overcome the difficulty
induced by the non-ellipticity of the conic laplacian we will consider branched covers of the coordinate open sets of $X$. The motivation for introducing such covering maps is as follows.
The property we have to establish involves differentiation with respect to multi-valued vector fields of type $\displaystyle z^{1-\beta}\frac{\partial}{\partial z}$. The observation is that this type of vector fields become single-valued (but meromorphic, in general) on a branched cover, provided that  the ramification is chosen according to the denominators of $\beta_j$.

Next, we recall that at the heart of Evans-Krylov's argument lies the weak Harnack inequality; we establish here a very precise  version of this result for the pull-back of the cone metric by the branched cover.
This is the most delicate part of our proof, in particular because the specificity of Evans-Krylov's method compels us to work not only on geodesic balls in $\CC^n \setminus \Delta$ but also on balls centered at a point of $\Delta$. Concerning the technical tools we establish in this part of our paper we mention the Sobolev inequality, and the integration by parts formula ``in conic setting".
This later technique is a bit non-standard, as it involves functions which are subharmonic with respect to a metric with conic singularities, rather than plurisubharmonic functions.\\


\noindent
\textbf{Organization of the paper.}
\begin{itemize}
\item[$\bullet$] \S 2: We prove here a general Laplacian estimate in a framework including some geometries with unbounded curvature, like typically the conical one.
\item[$\bullet$] \S 3: We collect some facts from \cite{CGP}: the construction of the regularized conic metric $\ome$, and the expression of its curvature tensor in some coordinate system adapted to the geometry of the pair $(X,D)$. We will observe that the curvature of $\ome$ cannot be uniformly (in $\ep$) bounded below, which is the main source of issues. 
\item[$\bullet$] \S 4: We introduce a particular type of uniformly bounded smooth functions, denoted by $\Psi_{\ep}$, whose $\ddc$ compensates the singularities of the curvature tensor of $\ome$, so that it can be used as an auxiliary function in the general estimate established in \S 2.
\item[$\bullet$] \S 5: We establish various estimates related to our Monge-Ampère equation in order to be able to apply the Laplacian estimate of \S 2, and conclude the proof of Theorem A.
\item[$\bullet$] \S 6: As an application of Theorem A, we get a vanishing theorem for orbifold holomorphic tensors; we also generalize Theorem A to general klt pairs and log smooth log canonical pairs.  
\item[$\bullet$] \S 7: We prove Theorem B, namely the Hölder estimates for the second derivatives of the potential of the conic Kähler-Einstein metric.\\
\end{itemize}

\noindent
\textbf{Acknowledgements.}
We are grateful to Sébastien Boucksom for his insightful suggestions which helped a lot improve the exposition of the present article. We also thank Tien-Cuong Dinh for sharing his valuable ideas regarding section \ref{sec:ipp}, as well as Jianchun Chu for pointing out a small inaccuracy in the previous version of this work.

\noindent
Part of this work was completed during the first author's visit to the Korea Institute for Advanced Study, and the
second author's visit to the Hong-Kong University, the National Taiwan University and
the Institute for Mathematical Sciences (Singapore), respectively. M.P. is grateful to
Ngaiming Mok, Jungkai Alfred Chen and Wing Keung To for the invitation, and for the excellent working conditions
provided by these institutes.

\section{Estimates for the Monge-Amp\`ere operator}
\label{sec:estimates}
\medskip
Let $(X, \omega)$ be a $n$-dimensional compact 
complex manifold, endowed with  
K\"ahler metric. The following Laplacian estimate is a generalization of the usual estimate due to Yau \cite{Yau78} (see also \cite{Siu, Paun, CGP, BBEGZ}) involving a lower bound of the holomorphic bisectional curvature of $\om$. Here we allow (negative) degeneracy of the curvature as long as it is controlled by the $\ddc$ of a bounded function. More precisely, we have

\begin{prop}
\label{prop:key}
Let $\omvp:=\om+\ddc \vp$ be a Kähler metric satisfying $$\omvp^n= e^{\pp-\psm} \om^n$$ for some smooth functions $\ppm$. We assume that there exists $C>0$ and a smooth function $\Psi$ such that:
\begin{enumerate}
\item[$(i)$] $\sup_X |\vp| \le C$
\item[$(ii)$] $\ddc \Psi \ge -C \om$ and $\, \sup_X |\Psi|\le C$
 \item[$(iii)$] $\ddc \ppm \ge -(C\om + \ddc \Psi)$ and $\, \sup_X |\ppm| \le C$
\item[$(iv)$] $i\Theta_{\om}(T_X) \ge -(C\om + \ddc \Psi) \otimes \mathrm{Id}$
\end{enumerate}
Then there exists a constant $A>0$ depending only on $C$ such that 
\[A^{-1} \om \le \omvp \le A \om\]
\end{prop}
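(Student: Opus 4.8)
The proof is a Chern–Lu / Siu–Yau type computation, following the classical Aubin–Yau Laplacian estimate but inserting the auxiliary function $\Psi$ to absorb the curvature degeneracy. Set $\omvp = \om + \ddc\vp$ and work with $u := \tre \om = n + \Delta_{\omvp}\vp > 0$ (the trace of $\om$ with respect to $\omvp$), which we want to bound above; once $\log u$ is controlled, the two-sided bound $A^{-1}\om \le \omvp \le A\om$ follows from the determinant identity $\omvp^n = e^{\pp - \psm}\om^n$ together with the elementary inequality $\tr_{\om}\omvp \le \frac{1}{(n-1)!}\,(\tre\om)^{n-1}\cdot \frac{\omvp^n}{\om^n}$.

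The computation I would carry out is the standard one for $\Delta_{\omvp}\log u$. Differentiating the Monge–Ampère equation twice and using the Schwarz/Cauchy inequality to discard the troublesome third-order terms (as in Yau's original argument), one obtains pointwise an inequality of the shape
\[
\Delta_{\omvp}\log(\tre\om) \;\ge\; \frac{\Delta_{\om}(\pp - \psm)}{\tre\om} \;-\; B\,(\tre\omvp)
\]
where $-B$ is a lower bound for the holomorphic bisectional curvature of $\om$. In our setting there is no such uniform $B$; instead hypothesis $(iv)$ gives $i\Theta_\om(T_X) \ge -(C\om + \ddc\Psi)\otimes \mathrm{Id}$, so the curvature term produces, after contracting with $\omvp$, not a clean multiple of $\tre\omvp$ but rather $-(C\,\tre\omvp + \Delta_{\omvp}\Psi \cdot \tre\omvp)$. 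The point is that $\Delta_{\omvp}\Psi \cdot \tre\omvp$ is exactly of the form that can be matched by adding $\Psi$ (or a multiple of it) into the test function. Likewise, the term $\Delta_{\om}(\pp - \psm)/\tre\om$ is controlled from below using $(iii)$: $\Delta_\om(\pp - \psm) \ge -2(Cn + \Delta_\om\Psi)$, and after dividing by $\tre\om$ the $\Delta_\om\Psi$ contribution is again absorbed by a $\Psi$-term in the test function (here one uses that $\Delta_\om\Psi \ge -C'\tre\om$ is \emph{not} what we have — rather we have $\Delta_\om\Psi$ bounded via $(ii)$ only from below by $-Cn$, which suffices since $\tre\om \ge$ a positive constant... actually more carefully one keeps $\Delta_{\omvp}\Psi$ throughout). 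So the correct test function is
\[
H \;:=\; \log(\tre\om) \;-\; (B_1)\,\vp \;+\; (B_2)\,\Psi
\]
for suitable large constants $B_1, B_2$ depending only on $C$. Applying $\Delta_{\omvp}$ and using $\Delta_{\omvp}\vp = u - \tre\omvp \cdot$(hmm, rather $\ddc\vp = \omvp - \om$ so $\Delta_{\omvp}\vp = n - \tre\omvp$) and the hypotheses $(ii),(iii),(iv)$ to bundle all the bad $\Delta_{\omvp}\Psi$ and curvature terms, one arrives at a clean differential inequality
\[
\Delta_{\omvp} H \;\ge\; \varepsilon_0\,(\tre\omvp) \;-\; C_0
\]
at any point, for constants $\varepsilon_0 > 0, C_0$ depending only on $C$ and $n$.

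Now run the maximum principle: at a point $x_0 \in X$ where the smooth function $H$ attains its maximum, $\Delta_{\omvp} H(x_0) \le 0$, hence $\tre\omvp(x_0) \le C_0/\varepsilon_0$. Combining with the determinant equation $\omvp^n/\om^n = e^{\pp - \psm}$ (bounded above and below by $(iii)$) and the arithmetic–geometric mean inequality relating $\tre\omvp$, $\tre\om$ and $\omvp^n/\om^n$, one gets an upper bound for $\tre\om(x_0)$, hence for $H(x_0)$, hence for $H$ everywhere; using $(i)$ and $(ii)$ to control $-B_1\vp + B_2\Psi$ this yields $\sup_X \tre\om \le A$ for $A = A(C,n)$. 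Feeding this back into the determinant identity gives the lower bound $\omvp \ge A^{-1}\om$, and the two bounds together give the claim.

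**Main obstacle.** The delicate point is the bookkeeping that shows the \emph{same} bounded function $\Psi$ simultaneously (a) controls the curvature degeneracy in $(iv)$ after contraction with the unknown metric $\omvp$, and (b) controls the Hessian degeneracy of $\pp - \psm$ in $(iii)$, while its own $\ddc$-lower bound $(ii)$ and its boundedness keep the argument from circularity — i.e. that after adding $B_2\Psi$ to the test function, the extra term $B_2\Delta_{\omvp}\Psi$ it generates does not destroy the positivity $\varepsilon_0(\tre\omvp)$ we need, precisely because $(ii)$ bounds $\ddc\Psi$ from below by $-C\om$ (not by $-C\omvp$). One has to choose $B_2$ and then $B_1$ in the right order: first fix $B_2$ large enough to beat the curvature and the $\pp - \psm$ Hessian terms coming with a factor involving $\tre\omvp$, then fix $B_1$ large enough so that $B_1\,\tre\omvp$ dominates the leftover constant-order contributions and produces the net coefficient $\varepsilon_0 > 0$. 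Everything else is the classical Yau estimate, essentially verbatim as in \cite[Proposition 2.1]{CGP}.
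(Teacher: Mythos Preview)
Your overall architecture matches the paper's, but there is a genuine gap in how you handle $\psm$. You claim $\Delta_\om(\pp - \psm) \ge -2(Cn + \Delta_\om\Psi)$, but this does not follow from $(iii)$: that hypothesis gives $\ddc\ppm \ge -(C\om + \ddc\Psi)$, hence a \emph{lower} bound on $\Delta_\om\psm$, whereas to control $-\Delta_\om\psm$ from below you would need an \emph{upper} bound on $\Delta_\om\psm$, and none is assumed. This is exactly why the paper splits $f = \pp - \psm$ and treats the two pieces by different mechanisms. For $\pp$ the lower bound suffices directly. For $\psm$ the trick is to add $\psm$ itself to the test function and use that $C\om + \ddc(\Psi + \psm) \ge 0$: for a nonnegative $(1,1)$-form $\alpha$ one has $\alpha \le (\tr_{\omvp}\alpha)\,\omvp$, and tracing against $\om$ gives
\[
\Delta_{\omvp}(\Psi + \psm) \;\ge\; -C\,\tr_{\omvp}\om + \frac{\Delta_\om(\Psi + \psm)}{\tr_\om\omvp},
\]
which cancels the dangerous $-\Delta_\om\psm/\tr_\om\omvp$. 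The paper's resulting test function is $\log\tr_\om\omvp + 3\Psi + \psm - (C+1)\vp$, not just $\log\tr_\om\omvp + B_2\Psi - B_1\vp$.

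Two smaller points. First, you declare $u = \tr_{\omvp}\om$ (the Chern--Lu quantity) but then write down the Aubin--Yau inequality shape, which is for $\tr_\om\omvp$; the paper works with $\tr_\om\omvp$, and this is what hypothesis $(iv)$ --- a \emph{lower} curvature bound --- is suited for. Second, your description of the curvature step is too coarse: the curvature contribution in the Siu--Yau inequality is $\frac{1}{\tr_\om\omvp}\sum_{i\le k}\big(\tfrac{\lambda_i}{\lambda_k}+\tfrac{\lambda_k}{\lambda_i}-2\big)R_{i\bar ik\bar k}$, and to dominate it by $\Delta_{\omvp}\Psi + C\,\tr_{\omvp}\om$ one has to exploit the Bianchi symmetry to get \emph{both} $R_{i\bar ik\bar k} \ge -(C + \Psi_{i\bar i})$ and $R_{i\bar ik\bar k} \ge -(C + \Psi_{k\bar k})$, pairing each with the matching ratio $\lambda_k/\lambda_i$ or $\lambda_i/\lambda_k$ (this is the paper's claim $(*)$ in Step~2).
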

\bigskip

Here $\Theta_{\om}(T_X)$ denotes the Chern curvature tensor of $(T_X, \om)$, and the inequality in $(iv)$ is to be taken in the sense of Griffiths positivity. 

The new feature in this statement lies in the introduction of the function $\Psi$ which is only supposed to be uniformly bounded and uniformly quasi-psh (cf $(ii)$). For example, the case $\Psi=0$ in condition $(iv)$ would just mean that the holomorphic bisectional curvature of $\om$ is bounded from below by $C$. So this more general framework enables more flexibility compared with the usual case $\Psi=0$ (cf $(iii)$ and $(iv)$), and this will be crucial in our matter.

\begin{proof}
We divide the proof into three steps. The first one consists in recalling the usual Siu-Yau laplacian inequality; in the second, we will deal with the singularities coming the curvature tensor, and in the last one, we will take care of the singular terms involving laplacians of $\ppm$. \\

\noindent
\textbf{Notations.}

\noindent
For now, we do not need to know that the rhs $e^{\pp-\psm} \om^n$ has a special form, so we will set $f:= \pp-\psm$. We denote by $g$ (resp. $g_{\vp}$) the hermitian metric on $T_X$ induced by $\om$ (resp. $\omvp$). If $\Theta_{\om}(T_X)$ the Chern curvature of $(T_X, \om)$, then $i\Theta_{\om}(T_X)$ is a real $(1,1)$-form with values in the bundle of hermitian endomorphisms of $T_X$. Its contraction with $\om$, that we will write  $i\widetilde \Theta_{\om}$, is thus naturally a $(1,1)$-form with values in the bundle of hermitian endomorphisms of $T_X^*$. We will denote by $g_{\vp}^{-1}$ the hermitian metric induced by $g_{\vp}$ on $T_X^\star$.\\

\noindent
\textbf{Step 1: The Laplacian inequality.}

\noindent
We recall the following result, extracted from \cite[(3.2) p. 99]{Siu}.

\begin{prop} 
\label{ineq}
We have the following inequality
\[\Delta_{\omega_\vp} (\log\tr_{\om} \om_\vp ) \ge \frac{1}{\tr_{\om}\om_\vp} \left[ -\tr_{\om} \Ric(\omvp)+ \tr_{\omvp}\left(\tr_{g_{\vp}^{-1}}(i\widetilde \Theta_{\om})\right)  \right]\]
\end{prop}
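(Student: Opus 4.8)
\textbf{Plan of proof for Proposition \ref{ineq}.}

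The plan is to establish the pointwise inequality by a direct normal-coordinate computation, following the classical Bochner-type estimate of Yau as presented by Siu. First I would fix an arbitrary point $x_0 \in X$ and choose holomorphic coordinates $(z^i)$ centered at $x_0$ that are normal for $\om$ at $x_0$: that is, $g_{i\bar j}(x_0) = \delta_{ij}$ and $dg_{i\bar j}(x_0) = 0$, so that the first derivatives of the metric coefficients of $\om$ vanish at $x_0$ and the full curvature tensor $R_{i\bar j k \bar l}$ of $\om$ is given there purely by the second derivatives $-\partial^2 g_{i\bar j}/\partial z^k \partial \bar z^l$. One may additionally arrange that $g'_{i\bar j}(x_0)$ (the coefficients of $\omvp$) is diagonal. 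With these normalizations the quantities $\tr_\om \omvp = g^{i\bar j} g'_{i\bar j}$ and its logarithm become tractable, and the Laplacian $\Delta_{\omvp} = g'^{i\bar j}\partial_i \partial_{\bar j}$ acting on $\log \tr_\om \omvp$ can be expanded.

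The key computational steps, carried out at $x_0$, are: (1) differentiate $\log \tr_\om \omvp$ twice, producing a term $\frac{1}{\tr_\om \omvp}\Delta_{\omvp}(\tr_\om \omvp)$ minus a gradient-squared term $\frac{1}{(\tr_\om\omvp)^2}|\partial \tr_\om\omvp|^2_{\omvp}$; (2) use the Monge–Ampère/Kähler structure — namely that $\omvp$ is Kähler and $g'_{i\bar j} = g_{i\bar j} + \partial_i\partial_{\bar j}\vp$ — to commute derivatives and express $\partial_i\partial_{\bar j}(\tr_\om \omvp)$ in terms of the curvature $R_{i\bar j k \bar l}$ of $\om$, the third derivatives of $\vp$, and $\Ric(\omvp)$; this is the step where the Ricci curvature of $\omvp$ enters, via $\partial_i\partial_{\bar j}\log\det(g') = -(\Ric \omvp)_{i\bar j}$ after contracting; (3) invoke a Cauchy–Schwarz inequality to show that the "bad" third-order terms $g'^{i\bar j}g'^{k\bar l}g^{p\bar q}\partial_i g'_{k\bar q}\overline{\partial_j g'_{l\bar p}}$ dominate the gradient term $\frac{1}{(\tr_\om\omvp)^2}|\partial\tr_\om\omvp|^2_{\omvp}$, so that the two cancel with a favorable sign and drop out. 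What survives is exactly the right-hand side: the curvature of $\om$ contracted appropriately (the term $\tr_{\omvp}(\tr_{g_\vp^{-1}}(i\widetilde\Theta_\om))$, which in indices reads $g'^{i\bar j}g^{k\bar l}R_{i\bar j k\bar l}$ up to the metric contractions defining $i\widetilde\Theta_\om$ as an endomorphism-valued form on $T_X^*$) minus the Ricci term $\tr_\om \Ric(\omvp) = g^{i\bar j}(\Ric\omvp)_{i\bar j}$, all divided by $\tr_\om\omvp$.

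The main obstacle, and the step requiring the most care, is the bookkeeping in step (2)–(3): correctly tracking which curvature tensor ($\om$ versus $\omvp$) appears after each differentiation, ensuring the cross terms organize into the stated intrinsic contraction, and verifying the Cauchy–Schwarz step that eliminates the gradient term is valid with the right constant (so nothing is lost). Since the statement is quoted verbatim from \cite[(3.2) p. 99]{Siu}, the cleanest route is in fact to simply cite that reference and recall that the inequality is the standard Siu–Yau–Aubin Laplacian estimate; but if a self-contained argument is desired, the normal-coordinate computation above delivers it, with the understanding that the tensorial reformulation using $i\widetilde\Theta_\om$ and $g_\vp^{-1}$ is just a coordinate-free repackaging of $g'^{i\bar j}g^{k\bar l}R_{i\bar j k\bar l}$.
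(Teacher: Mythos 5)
Your plan matches the paper: Proposition~\ref{ineq} is lifted verbatim from Siu's lecture notes and the text offers no proof beyond the citation, so the normal-coordinate computation you sketch (differentiate $\log\tr_{\om}\omvp$ twice, commute derivatives via the K\"ahler symmetry $\partial_k g'_{i\bar j}=\partial_i g'_{k\bar j}$, absorb the gradient-squared term by Cauchy--Schwarz against the third-order terms) is exactly what underlies the reference. One small index slip in your parenthetical is worth flagging: at a point where $\om$ is orthonormal and $\omvp=\mathrm{diag}(\lambda_j)$, the surviving curvature term is $\sum_{i,k}\tfrac{\lambda_i}{\lambda_k}R_{i\bar ik\bar k}$ (the paper records precisely this in Step~1 of the proof of Proposition~\ref{prop:key}), which requires a $g_\vp$-contraction on \emph{both} pairs of slots; your suggested local form $g'^{i\bar j}g^{k\bar l}R_{i\bar jk\bar l}$ would instead give $\tr_{\omvp}\Ric(\om)=\sum_{i,k}\tfrac{1}{\lambda_i}R_{i\bar ik\bar k}$, which is a different tensor.
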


\noindent We remark that in the last term of the relation above we take the trace $\tr_{\omvp}$ of the contravariant part of the curvature tensor, and then take the trace $\tr_{g_{\vp}^{-1}}$ of the covariant part.

Let $p\in X$ be an arbitrary point; we consider a coordinate
system 
$w= (w^1,\ldots, w^n)$ on a small open set containing $p$, such that 
$\omega$ is orthonormal
and such that $\om_\vp$ is diagonal at $p$ when expressed in
the $w$-coordinates, i.e.
\begin{equation*}
\om_\vp= \sqrt{-1}\sum \lambda_j dw^j\wedge dw^{\ol j}
\end{equation*} 

\noindent
If we denote by $(R_{ i\ol j k \ol l})$ the components of $i\Theta_{\om}(T_X)$ with respect to the $w$-coordinates, we have:
\begin{equation*}
-\tr_{\om} \Ric(\omvp) = \Delta_{\om}f-\sum_{i,k}R_{i\bar i k \bar k}
\end{equation*}
as well as:
\begin{equation*}
\tr_{\omvp}\left(\tr_{g_{\vp}^{-1}}(i\widetilde \Theta_{\om})\right) = \sum_{i,k} \frac{\lambda_i}{\lambda_k}R^{i\bar i }_{k \bar k}= \sum_{i,k} \frac{\lambda_i}{\lambda_k}R_{i\bar i k \bar k}
\end{equation*}
Therefore, combining these two equalities with Proposition \ref{ineq}, we get:
\begin{equation}
\label{eq:ob4}
\Delta_{\omega_\vp} (\log\tr_{\om} \om_\vp ) \ge
\frac{1}{\sum_p \lambda_p} \left(\sum_{i\le k} \left({\lambda_i\over 
\lambda_k}+ {\lambda_k\over \lambda_i}- 2\right) R_{ i\ol i k \ol k}(w)+ \Delta_\om f \right)\\
\end{equation}
\bigskip

\noindent
\textbf{Step 2: Dealing with the curvature.}

\noindent
We are now going to exploit assumption $(iv)$. Recall that a form $\alpha \in \Omega^{1,1}_X(\mathrm{End}(T_X))$ is said to be Griffiths semipositive, what we write $\alpha \ge 0$, if for any vector fields $u,v$, we have $\la\alpha(u,u)v,v\ra_{\om} \ge 0$. 
So in our case, we can rewrite condition $(iv)$ in the previously chosen geodesic coordinates as

\[R_{i\bar jk \bar l}\, u_i\bar u_j v_k \bar v_l \ge -(C\delta_{i\bar j}+\Psi_{i\bar j})u_i\bar u_j |v|^2_{\om} \]
where $\Psi_{i \bar j} = \frac{\d^2 \Psi}{\d w_i \d \bar w_j}$.

\noindent
Applying this inequality with $u,v$ vectors of the orthornormal basis, we obtain for all $i,k$:
 \[R_{i \bar i k \bar k} \ge -(C+ \Psi_{i\bar i})\] 
Using the symmetries of the curvature tensor, we also get $R_{i \bar i k \bar k} \ge -(C+ \Psi_{k\bar k})$.\\

\noindent
We claim that \[\Delta_{\omvp} \Psi \ge \frac{-1}{\sum_p \lambda_p} \sum_{i< k}\left(\frac{\lambda_i}{\lambda_k}+\frac{\lambda_k}{\lambda_i}-2\right) R_{i \bar i k \bar k} -C \tr_{\omvp}\om \leqno{(*)}\] 
for some $C>0$.

\noindent
To show $(*)$, we use the previous inequalities which yield:
\[ \frac{1}{\sum_p \lambda_p}\left(\frac{\lambda_i}{\lambda_k}+\frac{\lambda_k}{\lambda_i}-2\right) R_{i \bar i k \bar k} \ge - \frac{1}{\sum \lambda_p}\left[\frac{\lambda_k}{\lambda_i}(C+\Psi_{i\bar i})+\frac{\lambda_i}{\lambda_k}(C+ \Psi_{k\bar k})\right] \]
As $C+\Psi_{i\bar i} \ge 0$ for all $i$ by assumption $(ii)$, we have: 
\begin{eqnarray*}
\Delta_{\omvp}\Psi &=& \sum_i \frac{1}{\lambda_i} (C+\Psi_{i\bar i})-C \tr_{\omvp}\om \\
&\ge&  \frac{1}{\sum \lambda_p}\sum_{i< k }\left[\frac{\lambda_k}{\lambda_i}(C+\Psi_{i\bar i})+\frac{\lambda_i}{\lambda_k}(C+ \Psi_{k\bar k})\right] -C \tr_{\omvp}\om  
\end{eqnarray*}
which shows $(*)$. Combining \eqref{eq:ob4} with $(*)$, we finally obtain
\begin{equation}
\label{eq:ob6}
\Delta_{\omega_\vp} (\log\tr_{\om} \om_\vp+\Psi ) \ge \frac{\Delta_\om f}{\tr_{\om} \omvp} - C \tr_{\omvp}\om 
\end{equation}
\medskip

\noindent
\textbf{Step 3: End of the proof.}

\noindent
The last term to deal with is $\Delta f$. Recall that $f = \pp- \psm$. 
By assumption $(iii)$, \[\Delta \pp \ge -nC - \Delta_{\om} {\Psi}\] and as 
\[\Delta_{\omvp} \Psi = \sum_i \frac{1}{\lambda_i} \Psi_{i \bar i } \ge \frac{\Delta \Psi}{\sum_p \lambda_p}- C \tr_{\omvp}\om\] we get 
\begin{equation}
\label{eq:ob7}
\Delta_{\omvp} \Psi  \ge -\frac{\Delta \pp}{\sum_p \lambda_p}-nC \tr_{\omvp}\om.
\end{equation}
Let us now treat the term $-\Delta \psm$. By assumption $(iii)$, we have  \[C\om+\ddc (\Psi+\psm)  \le \tr_{\omvp}(C\om+\ddc( \Psi+\psm)) \,  \omvp\] and by taking the trace with respect to $\om$, we get
\begin{equation}
\label{eq:ob8}
\Delta_{\omvp}  (\Psi+\psm) \ge -C \tr_{\omvp}\om + \frac{\Delta(\Psi+ \psm)}{\tr_{\om}\omvp}
\end{equation}
Plugging $(ii)$, \eqref{eq:ob7} and \eqref{eq:ob8} into \eqref{eq:ob6}, we obtain:
\begin{equation}
\label{eq:ob9}
\Delta_{\omega_\vp} (\log\tr_{\om} \om_\vp+3\Psi +\psm) \ge - C \tr_{\omvp}\om 
\end{equation}
for some bigger constant $C$.\\

\noindent
Now, as $\Delta_{\omvp} \vp = n - \tr_{\omvp} \om$, we have:
\[\Delta_{\omega_\vp} (\log\tr_{\om} \om_\vp+3\Psi +\psm-(C+1) \vp) \ge \tr_{\omvp}\om-n(C+1) \]
and we can apply the maximum principle as usual. As we have a priori bounds on $\Psi, \psm$ and $\vp$ by assumption, we obtain the desired result.
\end{proof}

\section{Metrics and Curvature Tensors}

In this section we will collect a few facts from \cite{CGP} concerning the
construction of metrics adapted to the pair $(X, D)$, their approximations and their corresponding curvature tensor.

\subsection{The regularized metric}
\label{sec:reg2}

Recall that $D=\sum_{k=1}^d (1-\beta_k)Y_k$ is a divisor with simple normal crossing support. For each $k=1, \ldots, d$, we can choose a section $s_k$ of $\cO(Y_k)$ cutting out the hypersurface $Y_k$; we also fix a smooth hermitian metric $h_k$ on this line bundle.

 In order to construct a sequence of regularized cone metrics (with respect to $D$), we introduce for any $\ep \ge 0$ the functions
 $\chi_k= \chi_{k, \varepsilon}: [\varepsilon^2, \infty[\to \R$ defined as follows:  
\begin{equation}
\label{3}
\chi_k(\varepsilon^2+ t)= {1\over \beta_k}\int_0^t{(\varepsilon^2+ r)^{\beta_k}- \varepsilon^{2\beta_k}\over r}dr
\end{equation}
for any $t\ge 0$. There exists a constant $C> 0$ independent of $k, \varepsilon$ such that
$0\le \chi_k(t) \le C$
provided that $t$ belongs to a bounded interval. Also, for each $\varepsilon> 0$
the function defined in \eqref{3} is smooth.\\

The choice of the function $\chi_k$ above is motivated by the following equality:
\begin{equation}
\label{5}
i\ddbar\chi_k\big(\varepsilon^2+ |s_k|^2\big)=
\sqrt{-1}{\langle D^\prime s_k, D^\prime s_k\rangle\over 
(\varepsilon^2+ |s_k|^2)^{1- \beta_k}}
- \frac{1}{\beta_k} \big((\varepsilon^2+ |s_k|^2)^{\beta_k}-\varepsilon^{2\beta_k}\big)\Theta_k
\end{equation}
where $D^\prime$ the $(1, 0)$ part of the Chern connection associated to $(\cO(Y_k), h_k)$, and by $\Theta_k$ the curvature form of $(\cO(Y_k), h_k)$.\\

Let $\omega$ be any K\"ahler metric on $X$; we consider the $(1,1)$-form 
\[\omega_{ \varepsilon}:= \omega+ {1\over N}\sum_{k=1}^d i\ddbar \chi_k\big(\varepsilon^2+ |s_k|^2\big)\] on $X$. For $N$ big enough (independent of $\ep$), we have $\ome \ge \om/2$, so that $\ome$ is a Kähler form. We fix such a $N$ till the end of this article. We denote by 
\begin{equation}
\label{pse}
 \psi_\varepsilon:= {1\over N}\sum_{k=1}^d \chi_k\big(\varepsilon^2+ |s_k|^2\big)
 \end{equation}
the potential; it satisfies $\sup_X |\pse| \le C$ for some uniform (in $\ep$) constant $C>0$. \\

Let now $p\in X$, and $p\in \cap (s_k=0)$ for $k= 1,\ldots, d$. We define the $(z)$-coordinates by the local expressions of $s_k$,
completed in an arbitrary manner. Then we have
\[\omega_\varepsilon|_\Omega\ge C\sum_k\sqrt{-1}{dz^k \wedge d\bar z^{ k}\over (\varepsilon^2+ |z^k|^2)^{1-\beta_k}}\]
Therefore, if $u= \sum_i u_i \frac{\d}{\d z^i}$ is a vector field of norm $1$, we have for all $k\in \{1, \ldots, d\}$:
\begin{equation}
\label{obs}
|u_k|^2 \le (\ep^2+|z^k|^2)^{1-\beta_k}
\end{equation}

\subsection{The curvature tensor of $\ome$}

Now, we recall the computation of the components of the curvature tensor
in \cite{CGP}. First recall the following elementary (and standard) result (see e.g. \cite[Lemma 4.1]{CGP} for the proof), which will provide us with a coordinate system adapted to the pair $(X, D)$.

\medskip 

\begin{lemm}
\label{lem:coor}
Let $(L_1, h_1),\ldots, (L_d, h_d)$ be a set of hermitian
line bundles, and for each index $k= 1,\ldots,d$, let $s_k$ be a section of $L_k$; we assume that the hypersurfaces 
$Y_k:= (s_k= 0)$
are smooth, and that they have strictly normal intersections. 
Let $\displaystyle p_0\in \cap Y_k$; then there exist a constant $C>0 $ and an open set $\Omega\subset X$ centered at $p_0$, such that for any point $p\in \Omega$ there exists a coordinate system $z= (z^1,\ldots, z^n)$ and a trivialization $\tau_k$ for $L_k$ such that:

\begin{enumerate}
\item [$(i)$] For $k= 1,\ldots, d$, we have $Y_k\cap \Omega= (z^k= 0)$;

\item [$(ii)$] With respect to the trivialization $\tau_k$, the metric
$h_k$ has the weight $\varphi_k$, such that 
\begin{equation}
\label{eq:coor}
\varphi_j(p)= 0, \quad d\varphi_j(p)= 0, \quad 
\Big|{\partial^{|\alpha|+|\beta|}\varphi_j\over \partial z^{\alpha}\partial z^{\bar \beta}}(p)\Big|\le C_{\alpha, \beta}
\end{equation}
for some constants $C_{\alpha, \beta}$ depending only on the multi indexes $\alpha, \beta$.
\end{enumerate}
\end{lemm}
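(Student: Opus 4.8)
The idea is to fix once and for all a ``background'' coordinate system and trivializations near $p_0$, and then, for each point $p\in\Omega$, to correct the trivializations by a holomorphic gauge transformation of order one that normalizes the weights at $p$. The only genuine content is the uniformity of the resulting bounds in $p$.

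\textbf{Step 1: background gauge.} Since the $Y_k$ are smooth and meet transversally at $p_0$, one can choose holomorphic coordinates $w=(w^1,\dots,w^n)$ on a neighborhood $\Omega_0$ of $p_0$ with $Y_k\cap\Omega_0=(w^k=0)$ for $k=1,\dots,d$. Trivializing $L_k$ over $\Omega_0$, the section $s_k$ is represented by a holomorphic function vanishing exactly on $(w^k=0)$, hence equal to $w^k$ times a non-vanishing holomorphic factor; absorbing that factor into the trivialization yields trivializations $\sigma_k$ of $L_k$ with $s_k=w^k\,\sigma_k$. Let $\psi_k$ be the weight of $h_k$ in $\sigma_k$, i.e. $|\sigma_k|_{h_k}^2=e^{-\psi_k}$. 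Shrinking $\Omega_0$ to a relatively compact polydisc $\Omega\Subset\Omega_0$, all the $\psi_k$ together with all their derivatives are bounded on $\overline{\Omega}$.

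\textbf{Step 2: pointwise normalization.} For $p\in\Omega$, let $\ell_{k,p}$ be the holomorphic polynomial of degree $\le 1$ whose real part is the first-order Taylor expansion of $\psi_k$ at $p$ (explicitly $\ell_{k,p}(w)=\tfrac12\psi_k(p)+\sum_j\tfrac{\partial\psi_k}{\partial w^j}(p)(w^j-w^j(p))$, using that $\psi_k$ is real), and set $\tau_k:=e^{\ell_{k,p}}\sigma_k$. The weight of $h_k$ in $\tau_k$ is $\varphi_k:=\psi_k-2\,\mathrm{Re}\,\ell_{k,p}$, which by construction satisfies $\varphi_k(p)=0$ and $d\varphi_k(p)=0$, while $\partial^\alpha\bar\partial^\beta\varphi_k=\partial^\alpha\bar\partial^\beta\psi_k$ for $|\alpha|+|\beta|\ge 2$. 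Hence \eqref{eq:coor} holds with $C_{\alpha,\beta}=\sup_{\overline{\Omega}}|\partial^\alpha\bar\partial^\beta\psi_k|$, independent of $p$.

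\textbf{Step 3: the coordinate system.} Taking $z:=w$ (independent of $p$) already yields coordinates with $(z^k=0)=Y_k\cap\Omega$ and all the properties of Step 2. If instead, as in the sequel, one wants $z^k$ to be the coefficient function of $s_k$ in the trivialization $\tau_k$, set $z^k:=w^ke^{-\ell_{k,p}}$ for $k\le d$ and complete arbitrarily (e.g. $z^m:=w^m$ for $m>d$); since the coefficients of $\ell_{k,p}$ are controlled by $\sup_{\overline{\Omega}}(|\psi_k|+|d\psi_k|)$, the change of variables $w\leftrightarrow z$ has all derivatives bounded uniformly in $p$ in both directions, and after shrinking $\Omega$ once more it is a biholomorphism onto its image for every $p\in\Omega$. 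Then $(z^k=0)=Y_k$, and since $\varphi_k(p)=0$ and $d\varphi_k(p)=0$ are intrinsic, the chain rule turns the bounds of Step 2 into uniform bounds in the $z$-coordinates, proving $(ii)$.

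\textbf{Main obstacle.} There is no substantial difficulty here: the whole matter is bookkeeping the dependence on $p$. The one point demanding care is Step 3 in the variant where the coordinates themselves vary with $p$: one must shrink $\Omega$ so that the $p$-dependent map $w\mapsto z$ remains a diffeomorphism with derivatives bounded uniformly (in both directions) over \emph{all} $p\in\Omega$, so that the pointwise weight estimates survive this change of variables.
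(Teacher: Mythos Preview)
Your argument is correct and follows exactly the standard route: fix background coordinates and trivializations, then kill the $0$- and $1$-jet of each weight at the given point by a holomorphic gauge change $\sigma_k\mapsto e^{\ell_{k,p}}\sigma_k$, and finally adjust the coordinates so that $s_k=z^k\tau_k$. The paper does not supply its own proof of this lemma; it simply quotes it from \cite[Lemma~4.1]{CGP}, where the same elementary normalization is carried out, so your write-up is in line with the intended argument.
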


\medskip
In these coordinates, we have the following explicit expression of the coefficients $(g_{p\b q })$ of the
metric $\ome$ constructed in \S \ref{sec:estimates}:
\begin{eqnarray}
g_{p\b q }&= &\, \omega_{p\b q}+ e^{-\varphi_q}{\delta_{pq}+ z^q\alpha_{pq}
\over (\ep^2+ |z^q|^2e^{-\varphi_q})^{1- \beta_q}}+ 
e^{-\varphi_p}{z^{\b p}\overline {\alpha_{qp}}
\over (\ep^2+ |z^p|^2e^{-\varphi_p})^{1- \beta_p}}+ \label{eq:metr} \nonumber \\
&+ & \sum_k{|z^{k}|^2 {\beta_{kpq}}
\over (\ep^2+ |z^k|^2e^{-\varphi_k})^{1- \beta_k}}+
((\ep^2+ |z^k|^2e^{-\varphi_k})^{\beta_k}-\ep^{2\beta_k})\frac{\partial^2 \vp_k}{\partial z^p \partial \bar z^q} \nonumber
\end{eqnarray}
 
\noindent The expressions $\alpha, \beta$ above are functions of the partial derivatives of $\varphi$; in particular, $\alpha$ is vanishing at the given point $p$
at order at least 1, and the order of vanishing of $\beta$ at $p$ 
is at least 2 (this helps a lot in the computations to follow...).\\

\noindent We recall the following useful result; for the proof, we refer to \cite[Lemma 4.2]{CGP}.

\begin{lemm}
\label{lem:inv}
In our setting, and for $\ep^2+|z|^2$ sufficiently small, we have:
\begin{enumerate}
\item[$(i)$] For every $k \in \{1, \ldots, d\}$, \[g^{k\b k}(z)=(\ep^2+|z^k|^2)^{1-\beta_k}(1+\O((\ep^2+|z^k|^2)^{1-\beta_k}));\]
\item[$(ii)$] For every $k,l \in \{1, \ldots, d\}$ such that $k\neq l$, \[g^{k \b l}(z)=\O((\ep^2+|z^k|^2)^{1-\beta_k}(\ep^2+|z^l|^2)^{1-\beta_l}),\]
\end{enumerate}
the $\O$ being with respect to $\ep^2+|z|^2$ going to zero.
\end{lemm}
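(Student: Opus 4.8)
This is \cite[Lemma 4.2]{CGP}; here is the strategy. One inverts the matrix $(g_{p\bar q})$ directly from the explicit formula for the coefficients of $\ome$ recalled just above. The key point is that, after rescaling the conic directions, this matrix becomes a small perturbation of a fixed, uniformly invertible one, so that Cramer's rule --- equivalently a Neumann series --- applies and the error can be tracked. On a fixed small polydisc $\Omega$ around $p$, set $\rho_k := \ep^2 + |z^k|^2 e^{-\varphi_k}$ for $k \le d$, with the convention $\beta_k = 1$ (so $\rho_k^{1-\beta_k} = 1$) for $k > d$. By \eqref{eq:coor} the weights $e^{\pm\varphi_k}$ are bounded above and below on $\Omega$ uniformly in $\ep$, hence $\rho_k \asymp \ep^2 + |z^k|^2$ there, uniformly in $\ep$. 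Introduce the rescaled coefficients $\widetilde{g}_{p\bar q} := \rho_p^{(1-\beta_p)/2}\rho_q^{(1-\beta_q)/2}\, g_{p\bar q}$.

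The heart of the argument is the claim that, for $\ep^2 + |z|^2$ small enough, $\widetilde{g} = \Lambda + E$ where $\Lambda$ is a hermitian matrix invertible uniformly in $\ep$ --- equal to $\mathrm{diag}(e^{-\varphi_1}, \dots, e^{-\varphi_d})$ in the conic block --- and $E$ has arbitrarily small operator norm, uniformly in $\ep$. One proves this by plugging the formula for $g_{p\bar q}$ into the definition of $\widetilde{g}_{p\bar q}$ and estimating term by term, using that $\alpha$ vanishes at $p$ to order $\ge 1$ and $\beta$ to order $\ge 2$, the normalisation \eqref{eq:coor}, and the elementary bound $|z^k|^2 \lesssim \rho_k$. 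The leading diagonal conic term $e^{-\varphi_k}\rho_k^{\beta_k-1}$ of $g_{k\bar k}$ becomes precisely $e^{-\varphi_k}$ after the rescaling; the bounded summand $\omega_{k\bar k}$ of $g_{k\bar k}$ then contributes, relatively to it, an error of size $\rho_k^{1-\beta_k} \asymp (\ep^2+|z^k|^2)^{1-\beta_k}$; and every remaining summand of $g_{p\bar q}$ --- in particular the off-diagonal ones --- carries, after rescaling, a product of positive powers of the $\rho_j$'s, hence is $\O((\ep^2+|z|^2)^\eta)$ for some $\eta > 0$.

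Granting the claim, the Neumann series $\widetilde{g}^{-1} = \Lambda^{-1} - \Lambda^{-1} E \Lambda^{-1} + \cdots$ converges and exhibits $\widetilde{g}^{-1}$ as a small perturbation of $\Lambda^{-1}$; inspecting which powers of the $\rho_j$'s survive in each term yields $\widetilde{g}^{k\bar k} = e^{\varphi_k} + \O((\ep^2+|z^k|^2)^{1-\beta_k})$ and the analogous product bound for $\widetilde{g}^{k\bar l}$ with $k \ne l$. Undoing the rescaling via $g^{p\bar q} = \rho_p^{(1-\beta_p)/2}\rho_q^{(1-\beta_q)/2}\,\widetilde{g}^{p\bar q}$, and using $\rho_k \asymp \ep^2 + |z^k|^2$ (more precisely $\rho_k = (\ep^2+|z^k|^2)(1+\O(|z|^2))$), one reads off $(i)$ and $(ii)$.

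The genuine work, and the main obstacle, is the bookkeeping in the middle step: keeping track, uniformly in $\ep$, of exactly which positive powers of which $\rho_j$'s each family of terms in the formula for $g_{p\bar q}$ produces after rescaling, so that the diagonal errors come out of size $(\ep^2+|z^k|^2)^{1-\beta_k}$ and the off-diagonal ones of the claimed product size. The delicate cases are the cross terms between two distinct conic directions and the $\alpha$-terms, where the order-$\ge 1$ vanishing of $\alpha$ must be played against the factors $z^j$ and the negative powers of $\rho_j$ in the denominators, repeatedly invoking $|z^j|^2 \lesssim \rho_j$. Uniformity in $\ep$ is built in, since every estimate is phrased through $\rho_k \asymp \ep^2 + |z^k|^2$ and the constants of \eqref{eq:coor} are $\ep$-independent. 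The complete computation is carried out in \cite[Lemma 4.2]{CGP}.
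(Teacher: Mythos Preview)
The paper does not prove this lemma at all: it simply states ``for the proof, we refer to \cite[Lemma 4.2]{CGP}'', and your proposal does exactly the same, while additionally supplying a correct outline of the argument from that reference (rescale by the half-powers $\rho_p^{(1-\beta_p)/2}$ to make the matrix a small perturbation of a uniformly invertible one, invert by Neumann series/Cramer, then undo the rescaling). So your proposal is both correct and aligned with the paper's approach.
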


We will use the two previous lemmata in order to analyze the singularity of the curvature tensor corresponding to the vector bundle $\displaystyle (T_X, \om_\varepsilon)$ as 
$\varepsilon\to 0$. We will evaluate separately the quantities 
$\displaystyle \frac{\partial^2 g_{p\b q}}{\partial z^r \partial \bar z^s}$ and
$\displaystyle g^{u \b t} \frac{\partial g_{p \b u}}{\partial z^r} \frac{\partial g_{t\b q}}{\partial \bar z^s}$. Let us recall their expression, as computed in \cite{CGP}: 
First, we have

\begin{eqnarray*}
\frac{\partial^2 g_{p\b q}}{\partial z^r\partial z^{\b s}}(p)&= & -\frac{\delta_{pq} \, \vp_{q, r\bar s}} {(\ep^2+|z^q|^2)^{1-\beta_q}} + \nonumber \\
& + &\!\!{\alpha_{pq, \b s}\delta_{qr}+ z^q\alpha_{pq, r \b s}\over \fepq^{1-\beta_q}}-(1-\beta_q) {|z^q|^2 \alpha_{pq, r}\delta_{qs}\over \fepq^{1-\beta_q}}- \nonumber  \\ 
&- &\!\!(1- \beta_q){|z^q|^2\alpha_{pq, \b s}\delta_{qr} +\delta_{qr}\delta_{qp}\delta_{qs}- |z^q|^2\varphi_{q, r\b s}\delta_{pq}\over \fepq^{2-\beta_q}}+  \label{second_der}  
\nonumber \\
&+& (1- \beta_q)(2- \beta_q){ |z^q|^2\delta_{qp}\delta_{qr}\delta_{qs}\over \fepq^{3-\beta_q}}+ {z^{\b p}\overline{\alpha_{qp, \b rs}}+ \delta_{ps}\overline{\alpha_{qp, \b r}}\over \fepp^{1-\beta_p}}- \nonumber\\
&- & (1- \beta_p){\delta_{pr}(z^{\b p})^2\overline{\alpha_{qp, s}}+ \delta_{ps}|z^{p}|^2\overline{\alpha_{qp, \b r}}\over \fepp^{2-\beta_p}}+\nonumber \\
&+ & \sum_k{|z^{k}|^2 {\beta_{kpq, r\b s}}\over (\ep^2+ |z^k|^2)^{1- \beta_k}}+ \sum_k\beta_k{\delta_{kr}\delta_{ks}- |z^k|^2\varphi_{k, r\b s}\over (\ep^2+ |z^k|^2)^{1- \beta_k}}\theta_k-\nonumber\\
&- & \sum_k\beta_k(1-\beta_k){|z^k|^2\delta_{ks}\delta_{kr}\over (\ep^2+ |z^k|^2)^{2- \beta_k}}\theta_k+ \sum_k\beta_k{\delta_{kr}z^{\b k}\over (\ep^2+ |z^k|^2)^{1- \beta_k}}\theta_{k, \b s}+\nonumber\\
&+& \sum_k\beta_k{\delta_{ks}z^k\over (\ep^2+ |z^k|^2)^{1- \beta_k}}\theta_{k, r}+ {\cO}(1).\nonumber
\end{eqnarray*}
where we denote e.g. by $\theta_{i, k}$ the $z^k$-partial derivative of the function
$\theta_i$ (which is the $(p, \b q)$-component of the curvature form $\Theta_i$ in the chosen coordinates). \\

As for the quantity $$\frac{\partial g_{p \b u}}{\partial z^r}(p)\frac{\partial g_{t\b q}}{\partial \bar z^s}(p)$$ it is given by:
\begin{eqnarray*}
&&\delta_{pu}\delta_{ur}\delta_{tq}\delta_{qs} (\beta_p-1)(\beta_s-1) \frac{\bar z^p z^q}{\fepp^{2-\beta_p}\fepq^{2-\beta_q}}+\\
&+&\delta_{pur} \frac{z^{\bar p} z^q}{\fepp^{2-\beta_p}\fepq^{1-\beta_q}}  \times \textrm{\small (bd)} +\delta_{pur} \frac{z^{\bar p} z^t}{\fepp^{2-\beta_p}\fept^{1-\beta_t}}  \times \textrm{\small (bd)} +\\
&+&\delta_{pur} \frac{ z^{\bar p}  z^s}{\fepp^{2-\beta_p}\feps^{1-\beta_s}}  \times \textrm{\small (bd)} +\delta_{tqs}\frac{z^{ q} z^{\bar u}}{\fepq^{2-\beta_q}\fepu^{1-\beta_u}}  \times \textrm{\small (bd)}+\\
&+&\delta_{tqs}\frac{z^{ q} z^{ \bar p}}{\fepq^{2-\beta_q}\fepu^{1-\beta_p}}  \times \textrm{\small (bd)}+\delta_{tqs}\frac{z^{ q} z^{\bar r}}{\fepq^{2-\beta_q}\fepr^{1-\beta_r}}  \times \textrm{\small (bd)}+\\
&+& \frac{ z^{\bar u}  z^q}{\fepu^{1-\beta_u}\feps^{1-\beta_q}}  \times \textrm{\small (bd)} + \frac{ z^{\bar u}  z^t}{\fepu^{1-\beta_u}\fept^{1-\beta_t}}  \times \textrm{\small (bd)} +\\
&+& \frac{ z^{\bar u}  z^s}{\fepu^{1-\beta_u}\feps^{1-\beta_s}}  \times \textrm{\small (bd)} +\frac{ z^{\bar p}  z^q}{\fepp^{1-\beta_p}\fepq^{1-\beta_q}}  \times \textrm{\small (bd)} +\\
&+& \frac{ z^{\bar p}  z^t}{\fepp^{1-\beta_p}\fept^{1-\beta_t}}  \times \textrm{\small (bd)} + \frac{ z^{\bar p}  z^s}{\fepp^{1-\beta_p}\feps^{1-\beta_s}}  \times \textrm{\small (bd)} +\\
&+& \frac{ z^{\bar r}  z^q}{\fepr^{1-\beta_r}\fepq^{1-\beta_q}}  \times \textrm{\small (bd)} + \frac{ z^{\bar r}  z^t}{\fepr^{1-\beta_r}\fept^{1-\beta_t}}  \times \textrm{\small (bd)} +\\
&+& \frac{ z^{\bar r}  z^s}{\fepr^{1-\beta_r}\feps^{1-\beta_s}}  \times \textrm{\small (bd)} +\frac{\partial g_{p \b u}}{\partial z^r}(p) \times \textrm{\small (bd)} + \frac{\partial g_{t\b q}}{\partial \bar z^s}(p) \times \textrm{\small (bd)}.\\
\end{eqnarray*}
where {\small{(bd)}} means "bounded terms".\\

\medskip

\subsection{Lower bounds on the curvature}
\label{subsec:lower}
\noindent Let now $u= \sum u_p \frac{\d}{\d z_p}, v=\sum v_r  \frac{\d}{\d z_r}$ be vector fields with norm $1$: $|u|^2_{\ome} = |v|^2_{\ome} =1$. Using the expressions above and the observation \eqref{obs}, we conclude the existence of a constant $C> 0$ so that the following facts hold true.

\begin{enumerate}

\item If all the indexes $(p, q, r, s)$ are distinct, then we have
$$\left| R_{\varepsilon p\ol q r \ol s} \right| |u_p \bar u _q v_r \bar v_s| \leq C.$$

\smallskip

\item If only two indexes among $(p, q, r, s)$ are identical, we have 
$$\left| R_{\varepsilon p\ol p r \ol s} \right| |u_p \bar u _p v_r \bar v_s| \leq C.$$
as well as
\begin{equation*}
 R_{\varepsilon p\ol q p \ol s}u_p \bar u _q v_p \bar v_s + 
R_{\varepsilon q\ol p s \ol p}u_q \bar u _p v_s \bar v_p   \geq -\frac{C}{(\varepsilon^2+ |z^p|^2)^{1/2}}
|u_p| |v_p|- C \\
\end{equation*}
if $p\in\{1,..., d\}$. If $p\in\{d+1,..., n\}$, then we have
\begin{equation*}
R_{\varepsilon p\ol q p \ol s}u_p \bar u _q v_p \bar v_s + 
R_{\varepsilon q\ol p s \ol p}u_q \bar u _p v_s \bar v_p  \geq -C
\end{equation*}

\smallskip

\item If the collection of indexes $(p, q, r, s)$ consists of two distinct elements, then we have:

$$  R_{\varepsilon p\ol p r \ol r} |u_p \bar u_p v_r \bar v_r|\geq -C$$
as well as

\begin{equation*}
 R_{\varepsilon p\ol p p \ol s} u_p \bar u_p v_p \bar v_s+ 
R_{\varepsilon p\ol p s \ol p} u_p \bar u_p v_s \bar v_p
\geq \frac{-C}{(\ep^2+|z^p|^2)^{1/2}}  |u_p|^2 |v_p|- C
\end{equation*}
if $p\in \{1, \ldots, d\}$ and 
\begin{equation*}
 R_{\varepsilon p\ol p p \ol s} u_p \bar u_p v_p \bar v_s+ 
R_{\varepsilon p\ol p s \ol p} u_p \bar u_p v_s \bar v_p
\geq- C
\end{equation*}
if $p\in \{d+1, \ldots, n\}$.\\

We also have
\begin{equation*}
R_{\varepsilon p\ol q p \ol q}  u_p \bar u_q v_p \bar v_q + 
R_{\varepsilon q\ol p q \ol p} u_q \bar u_p v_q \bar v_p \geq \frac{C}{(\varepsilon^2+ |z^p|^2)^{1/2}(\varepsilon^2+ |z^q|^2)^{1/2}}
| u_p \bar u_q v_p \bar v_q|- C 
\end{equation*}
if $p, q\in \{1,..., d\}$. If $p\in \{1,..., d\}$ and $q\in \{d+1,..., n\}$, then we have

\begin{equation*}
R_{\varepsilon p\ol q p \ol q}  u_p \bar u_q v_p \bar v_q + 
R_{\varepsilon q\ol p q \ol p} u_q \bar u_p v_q \bar v_p \geq \frac{C}{(\varepsilon^2+ |z^p|^2)^{1/2}}
| u_p| |v_p|- C 
\end{equation*}
and if $p, q\in \{d+1,..., n\}$, then we have
\begin{equation*}
R_{\varepsilon p\ol q p \ol q}  u_p \bar u_q v_p \bar v_q + 
R_{\varepsilon q\ol p q \ol p} u_q \bar u_p v_q \bar v_p \geq- C 
\end{equation*}
The inequalities corresponding to the 
other curvature coefficients are deduced from the previous ones, by symmetry.

\smallskip
\item In the remaining cases we have
\begin{eqnarray}
\nonumber
R_{\varepsilon p\ol p p \ol p}|u_p|^2 |v_p|^2 \geq -\frac{C}{\varepsilon^2+ |z^p|^2}
|u_p|^2 |v_p|^2- C,
\nonumber
\end{eqnarray}
if $p\in \{1,..., d\}$, and 
\begin{eqnarray}
\nonumber
R_{\varepsilon p\ol p p \ol p} |u_p|^2 |v_p|^2 \geq -C
\nonumber
\end{eqnarray}
if $p\in \{d+1,..., n\}$.\\
\end{enumerate}

\begin{rema}  Given the expressions above and \eqref{obs}, it is clear that the curvature tensor will be bounded from below by a constant independent of $\varepsilon$ if the coefficients
$\beta_j$ belong to the interval $]0, 1/2]$. However, simple examples show that the estimates above are practically optimal, i.e. in general we do not have this property.
\end{rema}


\section{A useful auxiliary function}
\label{sec:aux}

\noindent Let $\rho\in (0, 1)$ be a real number; we consider the function
$$\Psi_{\ep, \rho}:= C\sum_{k= 1}^d\chi_\rho(\ep^2+ |s_k|^2) ;$$
we remark that functions of this kind have already appeared in he construction of the metric $\omega_\varepsilon$ in \S \ref{sec:reg2}. By the relation \eqref{5}, we have 
\begin{equation}
Ci\ddbar \big( \chi_\rho(\varepsilon^2+ |s_k|^2)\big)\geq (\varepsilon^2+ |s_k|^2)^{\rho-1}\sqrt{-1}\langle D^\prime s_k, D^\prime s_k\rangle- {C\over \rho}\omega_\varepsilon;\\
\end{equation}
hence by adjusting the constants, we obtain
\begin{equation}
\label{eq:psi2}
i\ddbar \Psi_{\ep, \rho}\geq C\sum_{k=1}^d
(\varepsilon^2+ |s_k|^2)^{\rho-1}\sqrt{-1}\langle D^\prime s_k, D^\prime s_k\rangle- {C}\omega_\varepsilon.
\\
\end{equation}
In the above equation, we do not write the parameter $\rho$ because it will be fixed
at the end of the proof (and of course, the two constants $C$ above are not the same, we hope that this is not too confusing). Similarly, we will use the lighter notation $\Psi_{\ep}$ instead of  $\Psi_{\ep, \rho}$. \\

We are willing to use these functions $\Psi_{\ep}$ in the Proposition \ref{prop:key} where they would play the role of $\Psi$. We already saw in \S \ref{sec:reg2} that these functions are uniformly bounded in $\ep$, and 
it follows from \eqref{eq:psi} that they all are $C\ome$-psh for some fixed $C>0$. Therefore condition $(ii)$ of Proposition \ref{prop:key} is fulfilled. The only remaining property to check concerns the curvature of $\ome$; namely we want to see that the inequality 
\begin{equation}
\label{eq:curv}
i\Theta_{\ome}(T_X) \ge -(C\ome+ \ddc \Psi_{\ep})\otimes \mathrm{Id}
\end{equation} 
is satisfied. \\

To see it, we use the coordinate system $(z)$ introduced in the previous section, and adapted to $(X, D)$. Then \eqref{eq:curv} is equivalent to showing that for any vector fields $u,v$, we have:
\[R_{p\bar q r \bar s} u_p \bar u_q v_r \bar v_s \ge - C |u|_{\ome}^2 |v|_{\ome}^2 - \Psi_{\ep, p\bar q} u_p \bar u_q |v|_{\ome}^2\]
Without loss of generality, one can assume that $u,v$ are normalized, and the previous inequality reduces to 
\[R_{p\bar q r \bar s} u_p \bar u_q v_r \bar v_s \ge - C - \Psi_{\ep, p\bar q} u_p \bar u_q\]
Using \eqref{eq:psi2}, it would be enough to show that 
\begin{equation}
\label{eq:curv2}
R_{p\bar q r \bar s} u_p \bar u_q v_r \bar v_s \ge - C -C \sum_{p=1}^d \frac{1}{(\ep^2+|z_p|^2)^{1-\rho}} |u_p|^2
\end{equation}

To show that the previous inequality is satisfied, we separate the curvature terms according to how many distinct indexes appear in $(p,q,r,s)$, and use the estimates established in \S \ref{subsec:lower}.\\

\begin{itemize}
\item[$(i)$] If the indexes $(p, q, r, s)$ are distinct, then the relation (1) shows that we have  
\[R_{p\bar q r \bar s} u_p \bar u_q v_r \bar v_s \ge - C\]
(so that the sum above runs over all $(p, q, r, s)$ which are distinct).\\

\item[$(ii)$] If only two indexes among $(p, q, r, s)$ are identical, by the relation (2), we only have to show that
\[\frac{1}{(\varepsilon^2+ |z^p|^2)^{1/2}} |u_p| |v_p| \le  \frac{C}{(\ep^2+|z_p|^2)^{1-\rho}} |u_p|^2+C\]
But using \eqref{obs} and the basic inequality $x \le x^2+1$, the above relation will hold true as soon as $\rho>1-\beta_p$ for each $p$.\\

\item[$(iii)$] If the collection of indexes $(p, q, r, s)$ consists of two distinct elements, then by the relation $(3)$, there are three different inequalities to check. The first one is:
\[ \frac{1}{(\ep^2+|z^p|^2)^{1/2}}  |u_p|^2 |v_p| \le  \frac{C}{(\ep^2+|z_p|^2)^{1-\rho}} |u_p|^2+C\]
 As $\eqref{obs}$ yields $ \frac{|v_p|}{(\ep^2+|z^p|^2)^{1/2}} \le \frac C {(\ep^2+|z^p|^2)^{\beta_p/2}}$, the above inequality will hold as soon as $\rho>1-\beta_p/2$ for each $p$. 
 
The second inequality involves two indexes $p,q \in \{1, \ldots, d\}$:
\[ \frac{ | u_p \bar u_q v_p \bar v_q| }{(\ep^2+|z^p|^2)^{1/2}(\ep^2+|z^q|^2)^{1/2}}\le  \frac{C}{(\ep^2+|z_p|^2)^{1-\rho}} |u_p|^2+\frac{C}{(\ep^2+|z_q|^2)^{1-\rho}} |u_q|^2+C\]
and can be reduced (using the arithmetic-geometric mean inequality) to the following inequality:
\[ \frac{1}{\ep^2+|z^p|^2}|u_p|^2 |v_p|^2 \le  \frac{C}{(\ep^2+|z_p|^2)^{1-\rho}} |u_p|^2+C\]
which follows from \eqref{obs} as soon as $\rho>1-\beta_p$ for each $p$.

The third and last inequality to check is
\[ \frac{1}{(\ep^2+|z^p|^2)^{1/2}}|u_p| |v_p| \le  \frac{C}{(\ep^2+|z_p|^2)^{1-\rho}} |u_p|^2+C\]
and has already been treated in $(ii)$. 

\item[$(iv)$] The last case is when $p=q=r=s$, where we need to make sure that 
\[\frac{1}{\varepsilon^2+ |z^p|^2}
|u_p|^2 |v_p|^2 \le \frac{C}{(\ep^2+|z_p|^2)^{1-\rho}} |u_p|^2+C \]
which we checked in $(iii)$ already.\\
\end{itemize}

To conclude, if $\rho$ is chosen small enough, the functions $\Psi_{\ep}$ introduced in the beginning of this sections satisfy \eqref{eq:curv2} and hence \eqref{eq:curv}.

\section{Proof of Theorem A}

In this section, we gather the arguments and computations of the previous sections to complete the proof of Theorem A. 

Let us recall the notations: $(X,D)$ is a log smooth klt pair, $\om$ is a given Kähler metric, $dV$ is a smooth volume form. As for $D=\sum (1-\beta_k) Y_j$, its components $Y_k$ are smooth hypersurfaces cut out by sections $s_k$, and the coefficients $\beta_k$ belong to $(0,1)$. Finally, we are given a (bounded) solution $\vp\in \mathrm{PSH}(X, \om)$ of the following equation (we assume $dV$ to normalized if $\mu=0$): 
\begin{equation}
\label{eq1}
(\om+\ddc \vp)^n =  \frac{e^{\mu \vp} dV}{\prod_{k=1}^r |s_k|^{2(1-\beta_k)}}
\end{equation}
and we want to show that $\omvp:= \om + \ddc \vp$ has cone singularities along $D$. The strategy is to regularize the Monge-Ampère equation, but in the case $\mu<0$ (which should be thought as the "positive curvature" case), we have to be more cautious and treat the regularization differently.

\subsection{The $L^{\infty}$ estimate}
\label{sec:reg}
Here we explain how to derive the zero-order estimate for the potential solution of the regularized equation $(\mathrm{MA}_{\ep})$ considered in the introduction.
\subsubsection*{The case $\mu \ge 0$}
In that case, we simply consider the equation
\begin{equation}
\label{eq20}
(\om+\ddc \vpe)^n =  \frac{e^{\mu \vpe} dV}{\prod_{k=1}^r (|s_k|^{2}+\ep^2)^{(1-\beta_k)}}
\end{equation}

We can proceed exactly as in \cite[\S 5.1]{CGP} to obtain uniform $\mathcal C^0$ estimates for $\vpe$: $||\vpe||_{\infty} \le C$. In one word, we use Ko\l odziej's estimates in the case $\mu=0$, and if $\mu>0$, we use the approximate cone metric and the maximum principle. Moreover, the argument at the end of $\S 5$ in \cite{CGP} shows that $\vpe$ converges to $\vp$ in $L^1$ (every limit of a subsequence of $\vpe$ converge to $\vp$ by uniqueness of the solution of the MA equation).

\subsubsection*{The case $\mu < 0$}
In this case, even if \eqref{eq1} has a solution by assumption, we cannot guarantee that $\eqref{eq20}$ will also have one for $\ep>0$ small enough. So we begin by approximating $\vp$  with a decreasing sequence of smooth $\om$-psh functions $\phi_{\ep}$ satisfying 
\begin{equation}
\label{eq:psi}
\ddc \phi_{\ep} \ge -C \om
\end{equation} for some uniform $C>0$. This is possible thanks to Demailly's regularization theorem \cite{D1, D2}. In particular the $\phi_{\ep}$'s are uniformly bounded:
\begin{equation}
\label{eq:l3}
\sup_X |\phi_{\ep}|\le C
\end{equation}
for some $C$ independent of $\ep$.
Then, we consider the following equation (in $\vpe$):

\begin{equation}
\label{eq3}
(\om+\ddc \vpe)^n =  \frac{e^{\mu \phi_{\ep}} dV}{\prod_{k=1}^r (|s_k|^{2}+\ep^2)^{(1-\beta_k)}}
\end{equation}

\vspace{3mm}
\noindent
By multiplying $dV$ with a constant, we can make sure that the total mass of the RHS is $\{\om\}^n$; this constant depends on $\ep$, but in a totally harmless way because $\frac{e^{\mu \phi_{\ep}}}{\prod_{k=1}^r (|s_k|^{2}+\ep^2)^{(1-\beta_k)}}$ is uniformly bounded in $L^1(dV)$ by the klt condition $\beta_k<1$. Therefore we can assume that the volume form is already normalized, and using Ko\l odziej's estimates (the rhs is uniformly in $L^p$ for some $p>1$), we get 
\begin{equation}
\label{eq:c0}
\sup_X |\vpe| \le C
\end{equation}

for some uniform $C>0$. Finally, we know that $\vpe$ converges to $\vp$ in $L^1$ thanks to Ko\l odziej's stability theorem for instance, so that the Laplacian estimates that we will obtain in the next section for $\vpe$ will actually yield Laplacian estimates for the initial solution $\vp$.\\

Note that we could equally have used this regularization process (the introduction of $\phe$) to deal with the case where $\mu \ge 0$, instead of using the maximum principle. Besides, this argument will work in the general klt case whereas the maximum principle will not. 

\subsubsection*{Conclusion}
So in both cases, we produced smooth metrics $\om+\ddc \vpe$ with uniformly bounded potentials converging to $\om+\ddc \vp$ in the topology of currents. Therefore, in order to prove Theorem A, it will be enough to show that $\om+\ddc \vpe$  is uniformly equivalent to some approximation of the cone metric, e.g. the Kähler metric $\ome$ constructed in \ref{sec:reg2}.

\subsection{The Laplacian estimate}
As we saw above, we are interested in the following family of 
Monge-Amp\`ere equations: 
\[(\om+\iddb \vpe)^n = \frac{e^{\mu \phi_{\ep}}dV} {\prod_{k=1}^d(\ep^2+|s_k|^2)^{1-\dej}} \]
where $\phi_{\ep}$ is a smooth approximation of the solution $\vp$ of \eqref{eq1}. We rewrite the last equation under the following form:
\[(\om+\iddb \vpe)^n = e^{\fe+\mu\phi_{\ep} } \ome^n\]
where \[\fe=-\log \left(\frac  {\prod_{k=1}^d(\ep^2+|s_k|^2)^{1-\dej} \ome^n}{dV} \right). \]
\smallskip
We want to show the following Laplacian estimate
\begin{equation}
\label{lap}
C^{-1}\ome \le \omvpe \le C \ome
\end{equation} 
and to achieve this goal, we will use Proposition \ref{prop:key} with background metric $\ome$, $\Psi=\Psi_{\ep}$, and $(\pp, \psm)=(\fe+\mu \phe, 0)$ if $\mu \ge 0$ and $(\pp, \psm)=(\fe,-\mu \phe)$ if $\mu< 0$. \\

There are five conditions that need to be fulfilled if one wants to apply that proposition. The first condition is a uniform bound on $\sup |\vpe|$ which has already been obtained in \S \ref{sec:reg};  conditions
$(ii)$ and $(iv)$ concerning $\Psi_{\ep}$ and the curvature of $\ome$ have been checked in  \S \ref{sec:aux}; condition $(iv)$ is a direct consequence of \eqref{eq:psi} and \eqref{eq:l3}. The last thing to check is condition $(iii)$ concerning $\fe$; the bound $\sup |\fe|$ is easy and proved in \cite{CGP}, but the inequality
\[\ddc \fe \ge -(C\ome + \ddc \Psi_{\ep})\]
is more difficult and needs some further computations.\\

To obtain this inequality, we are going to use the computations of \cite[\S 4.5]{CGP}. These show that $\ddc \fe$ is bounded (up to some universal multiplicative constant) by sums (indexed on $i$) of terms like
\[\frac{dz^p \wedge d \bar z^p}{(\ep^2+|z^p|^2)^{\alpha_p}} \quad \textrm{or} \quad \frac{d z^p\wedge d\bar z^q+d \bar z^p\wedge d z^q}{(\ep^2+|z^p|^2)^{\alpha_p'}(\ep^2+|z^q|^2)^{\alpha_q'}}  \]
where $\alpha_p \in \left\{\beta_p, 1-\beta_p, \frac {1-\beta_p} {2}\right\}$ and $\alpha_p' \in \left\{\frac 1 2 - \beta_p, \beta_p-\frac 1 2 \right\}$. We deduce from this that $\pm \ddc \fe$ is dominated by
\[\sum_{i,p}   \frac{dz^p \wedge d \bar z^p}{(\ep^2+|z^p|^2)^{\tilde \beta_p}} \]
where $\tilde \beta_p := \max \{\beta_p, 1-\beta_p\}$. But from \eqref{eq:psi2}, it is clear that $C\ome+ \ddc \Psi_{\ep} \ge \ddc \fe$ for $C$ big enough, and $\rho < \min_p \min\{\beta_p, 1-\beta_p\}$. \\

\subsubsection*{Conclusion} Finally,  we may apply Proposition \ref{prop:key} in our setting which will give us the expected Laplacian estimate \eqref{lap}. As we observed in \S \ref{sec:reg} that $\omvpe$ was converging as a current to the given solution $\omvp$ of our initial Monge-Ampère equation \eqref{eq1}, the arguments given in the introduction apply to get the smooth convergence of $\omvpe$ to $\omvp$ on the compact subsets of $X\setminus \Supp(D)$, and the estimate \eqref{lap} guarantees that $\omvp$ will have cone singularities along $D$. Theorem A is thus proved.

\begin{rema}
\label{rem:unif}
As a consequence of all these computations, it happens that the Laplacian estimate will hold uniformly when the angles $\beta_k$ vary in a fixed range $[\delta, 1-\delta]$ for some $\delta>0$. This remark will play an important role in the proof of Theorem B.
\end{rema}

\section{Applications and generalizations}

\subsection{Vanishing of holomorphic tensor fields}

Our first application is a consequence of \cite{CGP} concerning the vanishing/parallelism of holomorphic tensors as defined by Campana. Recall that $T_s^r(X|D)$ can be defined as the vector bundle on $X$ whose sections are tensors of $T_s^r X := (\bigotimes^r T_X) \otimes (\bigotimes^s T_X^*)$ over $X\setminus \Supp(D)$ which are \textit{bounded} with respect to some metric with conic singularities along $D$ (cf \cite{Camp1} or \cite{CGP} for a more algebraic definition). As a consequence of the Corollary stated in the introduction and the techniques in \cite{CGP} we get the following:

\begin{theo}
\label{thm:van}
Let $(X, D)$ be a log smooth klt pair. Then the following assertions hold true.
\begin{enumerate}
\item[$(i)$] If $c_1(K_X+ D)$ contains a K\"ahler metric, then we have
$H^0\big(X, T_s^r(X|D)\big)= 0$
for any $r\ge s+ 1$.
\item[$(ii)$] If $-c_1(K_X+ D)$ contains a K\"ahler metric, then we have
$H^0\big(X, T_s(X|D)\big)= 0$
for any $s\ge 1$.
\item[$(iii)$] If $c_1(K_X+ D)$ contains a smooth, semi-positive 
(resp. semi-negative) representative, then the holomorphic sections of the bundle 
$T^r(X|D)$ (resp. $T_s(X|D)$) are parallel.\\
\end{enumerate}
\end{theo}

\subsection{The general klt case}

Our second application is a generalization of the main result of \cite{G2}, where we deal with general klt pairs, not necessarily log smooth ($X$ may be singular, and $D$ does not necessarily have normal crossing support):

\begin{theo}
\label{thm:klt}
Let $(X,D)$ be a klt pair, and let $\mathrm{LS}(X,D):= \{x\in X; (X,D)$ is log smooth at $x\}$.
\begin{enumerate}
\item[$(i)$] If $K_X+D$ is big, then the Kähler-Einstein metric of $(X,D)$ has cone singularities along $D$ on $\mathrm{LS}(X,D)\cap \mathrm{Amp}(K_X+D)$. 
\item[$(ii)$] If $-(K_X+D)$ is ample, then any Kähler-Einstein metric for $(X,D)$ has cone singularities along $D$ on $\mathrm{LS}(X,D)$.\\
\end{enumerate}
\end{theo}

This result is proved in \cite{G2} under the assumption that the coefficients of $D$ belong to $[1/2, 1)$. 
We do not intend to give a detailed proof of this result but we can still outline the main steps of the proof, which is a combination of the techniques in \cite{G2} and the new input of this paper. 

\begin{proof}[Sketch of proof]
First, we take a log resolution of the pair (we also have to reduce the big case to the ample case by considering a log canonical model whose existence is guaranteed by \cite{BCHM}). We get a Monge-Ampère equation of the following type: 
\[(\pi^*\om + \ddc \vp)^n = \prod |t_j|^{2a_j} \frac{e^{\mu \vp}dV}{\prod |s_i|^{2(1-\beta_i)}}\]
where $a_j >-1$ for all $j$ (klt condition) and $(s_i=0)$ is the equation of the strict transform of the $i$-th component of $D$ under the log resolution $\pi : X' \to X$; finally $\om$ is a Kähler form on $X$, and $dV$ is a smooth volume form on $X'$. 

To analyze the regularity of the solution $\vp$ of this equation, we regularize the equation in the following way: 
\[(\pi^*\om + \ddc \vpe)^n = \prod |t_j|^{2a_j} \frac{e^{\mu \phe}dV}{\prod (|s_i|^2+\ep^2)^{(1-\beta_i)}}\]
where $\phe$ is as in \S \ref{sec:reg} a regularization of $\vp$. To construct the approximate cone metric with good curvature properties, 
we first add $\ddc \log |s_E|^2$ to $\pi^*\om$ for some exceptional $\Q$-divisor $E$: we get a smooth Kähler form $\om_E$ on $X'\setminus \Supp(E)$ that extends to $X'$ 
as a Kähler form. Then we add $\ddc \pse$ (cf \S \ref{sec:reg2}) to $\om_E$ to get a Kähler form $\ome$ on $X'$ that behaves exactly as our approximate cone metric used in the previous sections. 

Although $\ome$ does not live in the same cohomology class as the unknown metric $\omvpe:= \pi^*\om + \ddc \vpe$, one can still use the usual Laplacian estimate on $X'\setminus E$. And adding under the Laplacian the term $\Psi_{\ep}$ considered in this paper will enable us to get rid of the unbounded (from below) curvature terms. Then one can compare on this Zariski open subset $\omvpe$ and $\ome$. Actually there is a new difficulty coming from the term $\prod |t_j|^{2a_j}$ (and particularly when some $a_j$ are negative) appearing in the rhs of the Monge-Ampère equation. But we can overcome it using the general estimate appearing in \cite{Paun} and expanded in \cite[Theorem 10.1]{BBEGZ}: for this precise point, there is no difference with \cite{G2} once the curvature issues of $\ome$ have been dealt with. 

As for the rest, it is relatively classic and completely identical to \cite{G2}, to which we refer the reader looking for details. 
\end{proof}

\subsection{Mixed cone and cusp singularities}

The last generalization concerns log smooth \textit{log canonical} pairs, ie pairs $(X,D)$ consisting in a compact Kähler manifold $X$ and a divisor $D= \sum_j (1-\beta_j)D_j + \sum_k D_k$ having simple normal crossing support and coefficients $\beta_j \in (0,1)$. We write $X_0:=X \setminus \Supp (D)$, $D= \Dklt+\Dlc$ where $\Dklt:= \sum_j (1-\beta_j)D_j$ and $\Dlc=\sum_k D_k$, and we denote by $J$ (resp. $K$) the set of indexes for the klt (resp. lc) part. \\

Whenever $\Dklt=0$ (the "absolute case"), it was showed by Kobayashi \cite{Koba} and Tian-Yau \cite{TY} that whenever $K_X+D$ is ample, there exists a unique negatively curved Kähler-Einstein metric on $X_0$ having cusp (also called Poincaré) singularities along $D$. Actually, in \cite{TY} the result is extended to the case where $\Dklt$ is an orbifold divisor, i.e. when $\beta_j = \frac 1 {m_j}$ for some integers $m_j$; the Kähler-Einstein metric being then an orbifold metric along $\Dklt$. \\

More generally, one can look for Kähler-Einstein metrics (with negative curvature) attached to the pair $(X,D)$, those metrics being usual Kähler-Einstein metrics on $X_0$, and having mixed cone and cusp singularities along $D$, ie being locally quasi-isometric to the model
\[\om_{\rm mod}:=\sum_{j=1}^r \frac{i dz_j\wedge d\bar z_j}{|z_j|^{2a_j}} + \sum_{j=r+1}^s \frac{i dz_j\wedge d\bar z_j}{|z_j|^{2} \log^{2}|z_j|^{2}}+\sum_{j=r+s+1}^n i dz_j\wedge d\bar z_j \] 
if $(X,D)$ is locally isomorphic to $(X_{\rm mod}, D_{\rm mod})$, where $X_{\rm mod}=(\mathbb{D}^*)^r\times (\mathbb{D}^*)^s \times \mathbb{D}^{n-(s+r)}$, $D_{\rm mod}=a_1 [z_1=0]+\cdots+a_r [z_r=0]+[z_{r+1}=0]+\cdots + [z_{r+s}=0]$; $\mathbb{D}$ (resp. $\mathbb{D}^*$) being the disc (resp. punctured disc) of radius $1/2$ in $\mathbb C$.

This question was studied in \cite{G12} using a pluripotentialist approach, where existence and uniqueness of those metrics was proved under the assumption that $\beta_j \in (0, 1/2]$ for all $j$.
Capitalizing on the techniques developped in this paper, we are now in position to prove the general statement: 

\begin{theo}
\label{thm:lc}
Let $(X,D)$ be a log smooth log canonical pair such that $K_X+D$ is ample. Then there exists a unique Kähler metric $\om$ on $X_0$ such that 
\begin{enumerate}
\item[$(i)$] $\Ric \om = -\om$ on $X_0$
\item [$(ii)$] $\om$ has mixed cone and cusp singularities along $D$.
\end{enumerate}
\end{theo}

\begin{proof}[Sketch of proof]
As in the previous section, we do not give a detailed proof of the Theorem since the main arguments are already in \cite{G12}.

\noindent
The first step is to express the problem in terms of Monge-Ampère equations. Picking a Kähler form $\om_0 \in c_1(K_X+D)$, and sections $s_j$ (resp. $s_k$) cutting out $D_j$ (resp. $D_k$), we are led to show that the (unique) solution of the following Monge-Ampère equation:
\[(\om_0+\ddc \vp)^n = \frac{e^{\vp}dV}{\prod_j |s_j|^{2(1-\beta_j)} \cdotp \prod_k |s_k|^2}\]
is smooth on $X_0$ and has mixed cone and cusp singularities along $D$; here $dV$ is a smooth volume form, $\vp \in \mathcal E(X, \om)$ and the product $(\om_0+\ddc \vp)^n$ is understood in the non-pluripolar Monge-Ampère product sense, cf \cite{GZ07}. 

The uniqueness is already proved in \cite{G12} using the comparison principle of \cite{GZ07}. As for the regularity, we introduce a parameter $\ep>0$ and reformulate the equation as
\[(\ome+\ddc \vpe)^n = e^{\vpe+\fe}\ome^n\]
where $\ome = \om_0+ \ddc \pse + \sum_{k\in K} \ddc\left(-\log \log^2 |s_k|^2\right)$  ($\pse$ being defined as in \eqref{pse}) and \[\fe=\pse-\log \left(\frac{\prod_j (|s_j|^{2}+\ep^2)^{1-\beta_j}\cdotp \prod_k |s_k|^2 \log^2 |s_k|^2 \cdotp \ome^n}{dV}\right)\]
The metric $\ome$ has cusp singularities along $\Dlc$ and is a "smooth" approximation of a cone metric along $\Dklt$. If we prove that $\omvpe:=\ome+\ddc \vpe$ is uniformly quasi-isometric to $\ome$, then passing to the limit and using \cite[Proposition 2.5]{G12}, this will yield the expected result. We are going to use our Laplacian estimate \ref{prop:key} or more precisely its generalization to complete Kähler manifold with Ricci curvature bounded from below (in a non quantitative way). This generalization is obtained using Yau's maximum principle instead of the classical maximum principle, cf \cite[Proposition 1.10]{G12}. Therefore, we have to show three things: \[(i)\, \sup |\vpe|\le C, \,(ii) \, \ddc \fe \ge -(C\ome+ \ddc \Pse), \,  (iii) \,  \Theta_{\ome}(T_X) \ge -(C\ome+ \ddc \Pse) \otimes \mathrm{Id}.\] 
The $L^{\infty}$ bound $(i)$ is easily obtained using Yau's maximum principle; condition $(ii)$ can be checked exactly as in this paper using the computations of \cite[ \S 4.2.3]{G12}. As for $(iii)$, the calculations of \cite[\S 4.2.2]{G12} show that the cone and cusp parts do almost not interfere in the curvature terms in the sense that the singularities of the components $R_{p\bar q r \bar s}$ do not get worse that the one of the cone metric. In particular condition $(iii)$ is fulfilled, which concludes the proof.
\end{proof}

With this result at hand, we can generalize the vanishing theorem for holomorphic orbifold tensors given in \cite{G12} for log smooth log canonical pairs with general coefficients in $[0,1]$: more precisely, if $(X,D)$ is a log smooth log canonical pair with $K_X+D$ ample, then there is non-zero holomorphic tensor (in the sense of Campana) of type $(r,s)$ whenever $r\ge s+1$.

\section{Higher regularity}

In this section we prove Theorem B, stating that the second conic derivatives of bounded solutions of (MA) are Hölder continuous with respect to the conic metric. 
Our starting observation is that it is sufficient to establish the result assuming that the coefficients 
$\beta_k$ are rational numbers, as long as the estimates we obtain in this case 
are uniform enough so that the general case can be derived by a limiting process
(so that we are implicitly using the stability results for (MA) equation).

As we can see from \cite{Siu}, in the classical setting $D=0$ the H\"older estimates 
of the second derivatives of the solution of a Monge-Amp\` ere equation are obtained 
thanks to the fact that the Hessian of $\varphi$ evaluated in the direction of a vector $v$ 
\emph{with constant coefficients} satisfies a certain differential inequality. 
In our case, the vector $v$ we have to consider should correspond to a local section of the 
\emph{orbifold tangent bundle}; 
it is naturally multi-valued, e.g.  
$$\displaystyle v=\sum_j a_j{z_j}^{1-\beta_j}\frac{\partial}{\partial z_j},$$ 
where $a_j$ are constants.
We cannot work directly with such object; however, 
the rationality assumption will allow us to pull back our data on a branched cover, chosen so that the above ``vector field" becomes single-valued, albeit meromorphic; we will refer to it as  
\emph{twisted vector field}. 
The estimate we have established in Theorem A is equivalent with the fact that the Hessian type operator defined by a basis of twisted vector fields gives a metric which
is quasi-isometric with the Euclidean metric when applied to the pull-back of the potential of the solution of (MA). However, the said metric is not K\"ahler, and this is an important source of difficulties.

Nevertheless, we are able to establish a Harnack inequality for the cone metric (more precisely for its pull-back by the cover), and Theorem B follows. 
We deduce the case of general real coefficients by perturbation. 

We have divided this last section of our paper into three parts: first we recall some definitions and properties related to the cone geometry (Donaldson's spaces, Sobolev inequality, branched covers, etc.). Then we prove the conic version of Harnack inequality for both geodesic balls and balls centered on $\Delta$, and finally we run Evans-Krylov's argument to conclude.

\subsection{Conic Hölder spaces}
\label{sec:func}
\subsubsection{Donaldson's spaces}
\label{sec:don}
Let us recall the setup. We fix $\D^n \subset \CC^n$ the unit polydisk centered at the origin, and a divisor $D= \sum_{k=1}^d (1-\beta_k) D_k$ where $D_k=(z_k=0)$, $\beta_k \in (0,1)$ for all $k$, and $d\le n$. We set $\Delta:= \Supp(D)$. We denote by $\omc$ the standard cone metric attached with $(\CC^n, D)$, i.e.
\[\omc:= i\sum_{k=1}^d \frac{ dz_k \wedge d \bar z_k}{|z_k|^{2(1-\beta_k)}} +i\sum_{k=d+1}^n dz_k \wedge d \bar z_k  \]
It defines on $\CC^n\setminus \Delta$ a Riemannian metric $g_{\beta}$, whose induced distance will be denoted by $d_{\beta}$. Following \cite{Don}, one defines, for $f$ a locally integrable function on $\D^n$ and $\alpha \in (0,1)$:
\[ ||f||_{\alpha, \beta}:= \sup_{\D^n} |f|+\sup_{x\neq y}\frac{|f(x)-f(y)|}{d_{\beta}(x,y)^{\alpha}}  \]
We will write: \[\mathscr C^{\alpha, \beta}_{\rm Don}:=\{f\in L^{\infty}(\D^n);  ||f||_{\alpha, \beta} <+\infty \}\]

For the higher derivatives, one would like to use the vector fields $z_k^{1-\beta_k} \frac{\d}{\d z_k}$ but these are not well defined and choosing a local branch would not give a reasonable definition. To overcome this difficulty, one could either "work with absolute values" as in \cite{Don} or use ramified covers to mimic the orbifold case. We will now present the two approaches and compare them. \\

\noindent
Let us set, for $k=1 \ldots n$,  $\xi_k= |z_k|^{1-\beta_k} \frac{\d}{\d z_k}$ if $1\le k \le  d$, and $\xi_k=\frac{\d}{\d z_k}$ else. 

\noindent
A $(1,0)$-form $\tau$ is said to be of class $\mathscr C^{\alpha, \beta}$ in the sense of Donaldson (or for short in $\mathscr C^{\alpha, \beta}_{\rm Don}$) if for all $k$, $\tau(\xi_k) \in \cab_{\rm Don}$.
Moreover, a $(1,1)$-form $\sigma$ is said to be of class $\mathscr C^{\alpha, \beta}$ in the sense of Donaldson (or for short in $\mathscr C^{\alpha, \beta}_{\rm Don}$) if for all $k,l$, we have $\sigma(\xi_k, \xi_l) \in \cab_{\rm Don}$. Finally, we let: 
 \[\mathscr C^{2,\alpha, \beta}_{\rm Don}:=\{f\in L^{\infty}(\D^n);  f,\d f, \d\bar \d f \in \mathscr C^{\alpha, \beta}_{\rm Don} \}\]
and define the associated norms in the obvious way.\\
 
\noindent
The typical example of such functions is $|z_k|^{2\beta_k}$ which is essentially the potential of the model cone metric on $\CC^\star$. This definition has the advantage to be simple and to provide function spaces that are handy to manipulate in view of the continuity method (cf \cite{Don}). However, because of the absolute value in the definition of $\xi_k$, a function like $z^{\beta}$ (more precisely a local branch of it) will not belong to this space whereas this is supposed to be the appropriate coordinate function. To incorporate that kind of functions, we can mimic the definition in the orbifold case (when the angles can be written as $\beta_k=1/m_k$ for an integer $k$) using ramified cover. This is the object of the following section.
  
\subsubsection{Local branched covers}
\label{sec:tw}

In this section, we will always assume that the coefficients $\beta_k\in (0,1)$ are rational numbers, and we will write them in their irreducible form $\beta_k=p_k/q_k$ with $p_k,q_k$ positive integers. Following \cite {CP}, we introduce the branched cover

\[
\begin{array}{cccc}
\pi:& \mathbb{D}^d \times \D ^{n-d} & \longrightarrow & \mathbb{D}^d \times \D ^{n-d} \\
&(z_1, \ldots, z_d, z_{d+1}, \ldots, z_n) & \longmapsto & (z_1^{q_1}, \ldots, z_d^{q_d}, z_{d+1}, \ldots, z_n)
\end{array}
\]
It branches precisely along $\Delta$, the support of $D$; if we denote by $w$ the coordinates upstairs, then the natural meromorphic vector fields to work with are \[X_k:= \frac{1}{q_k} w_k^{1-p_k} \frac{\d}{\d w_k} \quad (k=1 \ldots d) \] and the usual ones $X_k:= \frac{\d}{\d w_{k}}$ for $k>d$. Indeed, for $k=1,\dots, d$ the vector
fields $X_k$ defined above are precisely the multi-valued vectors  
\[w_k^{1-\beta_k} \frac{\d}{\d w_k} \quad (k=1 \ldots d) \]
expressed in the singular coordinates induced by the ramified cover.

Moreover, we have
\[\omb:=\pi^* \omc = i \sum_{k=1}^d q_k^2 |w_k|^{2(p_k-1)} dw_k \wedge d \bar w_k +i\sum_{k=d+1}^n  dw_k \wedge d \bar w_k \]
which is a cone metric with angles $2\pi p_k$ along $(w_k=0)$.
Hence, unless the coefficients of $D$ are of orbifold type (i.e. $p_k=1$ for all $k$), this metric will be degenerate near $\Delta$. We write $\tilde g_{\beta}$ and $\tilde d_{\beta}$ respectively for the Riemannian metric attached with $\omb$ on $\D^n\setminus \Delta$ and its induced distance. This distance $\tilde d_{\beta}$ gives rise to spaces $\mathscr C^{\alpha, \tilde \beta}=\{f; \, \sup_{\D^n} |f|+\sup_{x\neq y}\frac{|f(x)-f(y)|}{\tilde d_{\beta}(x,y)^{\alpha}}<+\infty \} $ in a similar way as in section \ref{sec:func}.\\

\noindent
Remember that in the orbifold case (i.e. $\beta_k=1/m_k$), a function $f$ is said to have orbifold $\mathscr C^{k,\alpha}$ regularity if its pull-back $\pi^*f$ by the ramified cover $\pi$ is in $\mathscr C^{k,\alpha}$ in the usual sense. 
So we extend this definition to the case of rational coefficients in the following way:

\begin{defi}
\label{lem:func}
Let $f$ (resp. $\tau$, $\sigma$) be a bounded function (resp. $(1,0)$-form, $(1,1)$-form) on $\D^n\setminus \Delta$. Then we say that
\begin{itemize}
\item[$\cdotp$] $f \in \cab  $ if $  \pi^* f \in \mathscr C^{\alpha, \tilde \beta}$ ,
\item[$\cdotp$] $ \tau \in \cab  $ if for all $k$, we have $ \pi^*\tau(X_k) \in \mathscr C^{\alpha, \tilde \beta}$ ,
\item[$\cdotp$] $ \sigma \in \cab  $ if for all $k$ and $l$, we have $\pi^*\sigma(X_k, X_l) \in \mathscr C^{\alpha, \tilde \beta}$.
\end{itemize}
Finally, we let
 \[\mathscr C^{2,\alpha, \beta}:=\{f\in L^{\infty}(\D^n);  f,\d f, \d\bar \d f \in \cab \}\]
and define the associated norm in the natural way.
\end{defi}

We used the same symbol to designate the space of Hölder continuous functions both in the sense of Donaldson and in the sense of the covers. This is legitimated by the following observation: 
\begin{lemm}
\label{lem:func}
The space $\cab$ and $\mathscr C^{\alpha, \beta}_{\rm Don}$ coincide. More precisely, if $f$ is a bounded function on $\D^n\setminus \Delta$, then we have the equality $||f ||_{\cab}=|| f ||_{ \mathscr C^{\alpha, \beta}_{\rm Don}}$ 
\end{lemm}

\begin{proof}
By definition, $\pi$ is a local isometry from $(\D^n\setminus \Delta, \omb)$ to $(\D^n\setminus \Delta, \omc)$. We claim that 
\[\sup_{x\neq y}\frac{|f(x)-f(y)|}{d_{\beta}(x,y)^{\alpha}} = \sup_{x\neq y}\frac{|f(\pi(x))-f(\pi(y))|}{\tilde d_{\beta}(x,y)^{\alpha}}\]
which would prove the first item. Indeed, choose $x, y$ two disctinct points. As $\pi$ is distance non-increasing, $d_{\beta}(\pi(x),\pi(y)) \le  \tilde d_{\beta}(x,y)$ so that  $ \frac{|f(\pi(x))-f(\pi(y))|}{d_{\beta}(\pi(x),\pi(y))^{\alpha}} \ge \frac{|f(\pi(x))-f(\pi(y))|}{\tilde d_{\beta}(x,y)^{\alpha}}$ and passing to the sup, we get the first inequality in the claim.

\noindent
Now, given $x,y$, there exist $x',y'$ such that $x=\pi(x')$, $y=\pi(y')$ and $d_{\beta}(x,y)=  \tilde d_{\beta}(x',y')$ as $\pi$ is an surjective \textit{isometry} from some open subset of $\D^n\setminus \Delta$ to $\D^n\setminus \Delta$ endowed with the suitable metrics. Then  $\frac{|f(x)-f(y)|}{d_{\beta}(x,y)^{\alpha}} = \frac{|f(\pi(x'))-f(\pi(y'))|}{\tilde d_{\beta}(x',y')^{\alpha}}$, which gives the second inequality when passing to the supremum on $x,y$. 

\end{proof}

However, as we hinted at the end of \S \ref{sec:don}, the higher regularity function spaces won't coincide, essentially because the function $z/|z|$ is not Hölder continuous. So if we have a function $f\in \cdab$ then from the arguments above the "pure derivatives" $|z_k|^{2(1-\beta_k)} \frac{\d^2f}{\d z_k \d \bar z_k}$ ($k=1\cdots d$) or $\frac{\d^2f}{\d z_k \d \bar z_k}$ (if $k>d$) belong to $\cab$ but we cannot conclude for the mixed derivatives of the type $|z_k|^{1-\beta_k} \frac{\d f}{\d z_k}$, $|z_k|^{1-\beta_k} \frac{\d^2 f}{\d z_k \d \bar z_l}$ (if $l>d$) and $|z_k|^{1-\beta_k} |z_l|^{1-\beta_l}  \frac{\d^2f}{\d z_k \d \bar z_l}$ (if $l\le d)$. 
One can still say the following, that express how close a function $f\in \cdab$ is to be in $\cdab_{\rm Don}$:

\begin{lemm}
\label{lem:func2}
Let $f\in \cdab$, then:
\begin{enumerate}
\item[$\cdotp$] For each $k= 1, \ldots, d$, $|z_k|^{1-\beta_k} \left| \frac{\d f}{\d z_k} \right| \in \mathscr \cab$
\item[$\cdotp$] For each $k= 1, \ldots, d$ and $l= d+1, \ldots, n$, $|z_k|^{1-\beta_k} \left|\frac{\d^2 f}{\d z_k \d \bar z_l} \right| \in \cab$
\item[$\cdotp$] For each $k,l= 1, \ldots, d$, $|z_k|^{1-\beta_k} |z_l|^{1-\beta_l}  \left|\frac{\d^2f}{\d z_k \d \bar z_l} \right| \in \mathscr \cab$
\end{enumerate}
\end{lemm}

\begin{proof}
Thanks to \ref{lem:func}, it suffices to show the conic Hölder continuity of these functions when pulled-back by $\pi$. 
But $\pi^*\left(|z_k|^{1-\beta_k} \left| \frac{\d f}{\d z_k} \right|\right) = |z_k|^{q-p}\left| \frac{\d f}{\d z_k}(\pi(z)) \right|  = |\pi^*\d f(X_k)|$ and $\pi^*\d f(X_k)$ is in $\mathscr C^{\alpha, \tilde\beta}$ as $f\in \cdab$. But by the reverse triangle inequality, the modulus of an Hölder continuous function is Hölder continuous too. The other items are very similar. 
\end{proof}

\subsubsection{Real coefficients}
Now we want to extend the definition of the (higher regularity) Hölder spaces defined above to arbitrary real numbers $\beta_k \in (0,1)$. Taking covers is no more possible, so if we want to rest on such a definition, we will need a limiting process. 

Observing that $\mathscr C^{\alpha, \beta} \subset \mathscr C^{\alpha, \beta'}$ as long as $\beta>\beta'$, one could say for instance that a function $f$ is in $\cdab$ if for all rational numbers $\beta'<\beta$, $f$ is in $\mathscr C^{2,\alpha, \beta'}$ with $||f||_{\mathscr C^{2,\alpha, \beta'}}\le C$ for a uniform $C$. However, although this is completely consistent for Hölder continuity, this approach is not suited for higher regularity. Indeed, take the most basic example $f(z)=|z|^{2\beta}$, then for $\beta' < \beta$, we have $|z|^{2(1-\beta')}
\frac{\d ^2f}{\d z \d \bar z}= |z|^{2(\beta-\beta')}$ which is not in $\mathscr C^{\alpha, \beta'}$ as soon as $\beta-\beta' < \alpha/2$.
So we would rather say here that given any sequence of rational numbers $\beta_n$ converging to $\beta$, $f$ is the limit (in the $\mathscr C^{\infty}_{\rm loc}$ topology of $\mathbb C^*$) of the functions $f_n(z)=|z|^{2\beta_n}$ belonging to  $\mathscr C^{2,1, \beta_n}$ and satisfying $||f_n||_{\mathscr C^{2,1, \beta_n}}\le C$ for some uniform $C$.

The previous example legimates the following definition:

\begin{defi}
Let $f$ be a bounded function on $\D^n\setminus \Delta$. Then we say that $f\in \cdab$ if for all $k=1,\ldots, d$, there exists a sequence of rational numbers $(\beta_{k,n})_{n\in \N}$ converging to $\beta_k$ such that there is a sequence of functions $f_n \in \mathscr C^{2,\alpha, \beta_{\cdotp,n}}$ converging to $f$ in $\mathscr C^{\infty}_{\rm loc}(\DD)$ and satisfying:
\[\sup_{n\in \N} ||f_n||_{\mathscr C^{2,1, \beta_{\cdotp, n}}}\ < \infty \]
\end{defi}

\noindent
This definition is unfortunately a little bit complicated, and in particular it is not clear from here how to endow $\cdab$ with a reasonable norm. However, it will particularly adapted to our regularity problem for Monge-Ampère equations.

\subsection{Geometry of the cone metric}
\label{sec:geom}

In this section, we collect a few facts about the geometry of $(\CC^n \setminus \Delta, g_{\beta})$, and it will be important that we allow the angles $\beta_k$ to take any value in the range $(0,+\infty)$. We introduce the following map:

$$\begin{array}{ccccc}
\Psi & : & (\DD, \om_{\rm eucl}) & \longrightarrow & (\DD, \omb) \\
 & & (z_1, \ldots, z_n) & \longmapsto & (c_1|z_1|^{\frac{1}{\beta_1}-1}z_1, \ldots, c_d|z_d|^{\frac{1}{\beta_d}-1}z_d, z_{d+1}, \ldots, z_n) \\
\end{array}$$
where $c_k=\beta_k^{1/2\beta_k}$. Then a basic computation shows that $\Psi$ induces is an isometric diffeomorphism between $(\DD, \om_{\rm eucl})$ and $ (\DD, \omb)$. From that we easily deduce:

\begin{lemm}
\label{lem:inj}
Let $p \in \CC^n \setminus \Delta$. Then the injectivity radius $\mathrm{inj}_{g_{\beta}}(p)$ of $g_{\beta}$ at $p$ equals $d_{\beta}(p,\Delta)$.
\end{lemm}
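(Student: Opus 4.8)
The plan is to reduce the computation, via the isometry $\Psi$, to a flat model in which geodesics, cut loci and distances can all be read off elementarily. First I would note that an isometric diffeomorphism preserves all the data in sight: $\Psi$ (which extends continuously over $\Delta$, sending $\Delta$ to $\Delta$) preserves lengths of curves, hence the distance $d_\beta$ and in particular the function $q\mapsto d_\beta(q,\Delta)$, and it carries geodesics to geodesics and cut loci to cut loci, hence preserves the injectivity radius. So it suffices to establish the identity for the flat metric sitting on the other side of $\Psi$. Since that metric is flat it has no conjugate points, so the injectivity radius at a point is controlled by the usual dichotomy (Klingenberg): it equals the minimum of the distance to the metric boundary — here $\Delta$ — and half the length of the shortest geodesic loop based at the point (the latter being $+\infty$ when there is no such loop).

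The inequality $\mathrm{inj}_{g_\beta}(p)\le d_\beta(p,\Delta)$ is then immediate and does not even need $\Psi$: the $g_\beta$-geodesic issued from $p$ straight toward the nearest component $Y_k$ of $\Delta$ has length exactly $d_\beta(p,\Delta)$ and reaches the metric completion, so it cannot be prolonged; hence $\exp_p$ is not defined on any ball of radius $>d_\beta(p,\Delta)$ around the origin of the tangent space at $p$.

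For the reverse inequality, by the dichotomy above it is enough to show that every geodesic loop based at $p$ has length at least $2\,d_\beta(p,\Delta)$, and this is where I would really use $\Psi$. In the flat coordinates furnished by $\Psi$, the metric near $\Delta$ is a product of flat cones over the components $Y_k$ (the transverse factors being genuinely Euclidean). A geodesic loop at $p$ winding around no component stays in a simply connected flat region and is constant; a loop winding around some $Y_k$ becomes, after developing, a straight segment joining the two copies of $p$ separated by the total angle $2\pi\beta_k$ about the apex of the corresponding cone, an apex lying at distance $\rho_k:=d_\beta(p,Y_k)$ from $p$, and an elementary planar estimate bounds the length of this segment below by $2\rho_k\ge 2\,d_\beta(p,\Delta)$ (this is precisely the point where the explicit shape of the cones, i.e. the form of $\Psi$, is used). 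Feeding this into Klingenberg's formula gives $\mathrm{inj}_{g_\beta}(p)\ge d_\beta(p,\Delta)$, hence equality.

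The crux — the only step that is not routine bookkeeping — is this geodesic loop estimate: without the developing picture supplied by $\Psi$ it is not clear how a loop encircling $Y_k$ is positioned relative to the singular locus, and it is exactly the combination of flatness with the explicit control on the cones afforded by $\Psi$ that turns it into a one-line computation in the plane.
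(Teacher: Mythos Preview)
The paper's argument after the transfer via $\Psi$ is one sentence: on $(\DD,\omega_{\rm eucl})$ the exponential map at $q=\Psi^{-1}(p)$ is the translation $v\mapsto q+v$, which is defined exactly on the ball of radius $r=d(q,\Delta)$ and is tautologically a diffeomorphism there, and cannot be extended further. There is no Klingenberg dichotomy to invoke and no loop to analyse, because in the Euclidean metric on $\mathbb{R}^{2n}\setminus\Delta$ geodesics are straight segments and a non-constant affine line never returns to its starting point.

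Your version goes wrong at exactly the step you flag as the crux. You describe the model furnished by $\Psi$ as ``a product of flat cones over the components $Y_k$'' and then try to bound loop lengths on such cones; but in the paper's setup $\Psi$ carries the Euclidean metric to the cone metric, so on the Euclidean side there are no cones and no apex to develop around. More seriously, the ``elementary planar estimate'' you invoke is false as stated: two points at distance $\rho_k$ from a vertex and separated by angle $2\pi\beta_k$ are joined by a chord of length $2\rho_k\sin(\pi\beta_k)$, which is \emph{strictly less} than $2\rho_k$ for every $\beta_k\in(0,1)\setminus\{\tfrac12\}$. Hence on a genuine cone with $\beta_k<\tfrac12$ the shortest geodesic loop through $p$ has half-length $\rho_k\sin(\pi\beta_k)<\rho_k$, and your Klingenberg argument would output $\mathrm{inj}_{g_\beta}(p)<d_\beta(p,\Delta)$ rather than equality. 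The paper's route sidesteps this entirely: on the genuinely Euclidean side the loop alternative is vacuous, and the result is immediate.
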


\begin{proof}
It is enough to to prove it for $(\DD, \om_{\rm eucl})$. Let $r=d(p,\Delta)$; the exponential map at $p$ is well-defined on $B(p,r)\subset \R^{2n}$ and is clearly a diffeomorphism. As this map cannot be extended to a larger region, we get the desired result. 
\end{proof}

\noindent

In the course of the proof of Theorem B, we will have to deal with two types of balls: 
first, the geodesic balls $B(p,r)$ where $p\in \DD$ and $r<d_{\beta}(p, \Delta)$, i.e.
\[B(p,r)= \{z\in \DD; d_{\beta}(z,p)<r\}\]
The other to be considered are "balls" centered a point $p\in \Delta$, which consists of points in $\DD$ with (conic) distance less than $r$ to a $p$ \--- without restriction on $r$. In the latter case, one can assume without loss of generality that $p=0$, so that \[B(0,r)= \{z\in \DD; \, (r_1^{\beta_1}/\beta_1)^2+\cdots +  (r_d^{\beta_d}/\beta_d)^2 +\sum_{k>d} r_k^2 < r^2 \}\]

\begin{lemm}
\label{lem:vol}
Let $p\in \D^n$, and set $V(r)=\mathrm{vol}_{\omc}(B(p,r))$. Then:
\[V(r) =
\begin{cases}
\frac{\pi^n}{n!} \, r^{2n}&\mbox{if } p\notin \Delta\\
\frac{\beta_1 \cdots \beta_d \, \pi^n }{n!}\, r^{2n} &\mbox{else}
\end{cases}
\]
\end{lemm}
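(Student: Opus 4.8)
The plan is to distinguish the two cases; the common point is that the Riemannian metric $g_{\beta}$ underlying $\omc$ is, locally, a Riemannian product of flat $2$-dimensional cones (the factor $\dfrac{\sqrt{-1}\,dz_k\wedge d\bar z_k}{|z_k|^{2(1-\beta_k)}}$ on $\CC^*$ being the flat cone of total angle $2\pi\beta_k$, $k\le d$) and a Euclidean factor $\R^{2(n-d)}$; in particular $g_{\beta}$ is flat on $\DD$, it is diagonal in the $z$-coordinates, and its volume form is $\prod_{k=1}^{d}|z_k|^{-2(1-\beta_k)}\,d\lambda$, where $d\lambda$ is the Lebesgue measure of $\CC^n$.

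\emph{Case $p\notin\Delta$.} Since $r<d_{\beta}(p,\Delta)=\mathrm{inj}_{g_{\beta}}(p)$ by Lemma~\ref{lem:inj}, the exponential map $\exp_p$ is a diffeomorphism from the radius-$r$ ball of the Euclidean space $(T_p\DD,g_{\beta}(p))$ onto $B(p,r)$; and, $g_{\beta}$ being flat, $\exp_p$ is moreover an isometry for these metrics — this is the classical fact that a flat Riemannian manifold is isometric, inside any normal ball, to a Euclidean ball of the same radius (it can equally be read off from the local coordinates $\zeta_k:=z_k^{\beta_k}/\beta_k$ on the conic factors, which make $g_{\beta}$ standard Euclidean). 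Hence $B(p,r)$ is isometric to the radius-$r$ Euclidean ball of $\R^{2n}$, of volume $\dfrac{\pi^n}{n!}\,r^{2n}$, as claimed.

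\emph{Case $p\in\Delta$.} We may take $p=0$ and integrate the volume form directly over the explicit region $B(0,r)=\{(\rho_1^{\beta_1}/\beta_1)^2+\cdots+(\rho_d^{\beta_d}/\beta_d)^2+\rho_{d+1}^2+\cdots+\rho_n^2<r^2\}$ recalled above (with $\rho_k=|z_k|$), which does not involve the arguments of the $z_k$. Passing to polar coordinates, the angular integrations produce a factor $(2\pi)^n$, so that $V(r)=(2\pi)^n\int_{\Omega_r}\prod_{k\le d}\rho_k^{2\beta_k-1}\prod_{k>d}\rho_k\;d\rho_1\cdots d\rho_n$, the region $\Omega_r\subset\R^n_{\ge 0}$ being the one above. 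The substitution $u_k=\rho_k^{\beta_k}/\beta_k$ for $k\le d$ and $u_k=\rho_k$ for $k>d$ satisfies $\rho_k^{2\beta_k-1}\,d\rho_k=\beta_k\,u_k\,du_k$ (resp. $\rho_k\,d\rho_k=u_k\,du_k$) and sends $\Omega_r$ onto the positive octant of the radius-$r$ Euclidean ball, whence $V(r)=(2\pi)^n\beta_1\cdots\beta_d\int_{\{u\ge 0,\ |u|<r\}}u_1\cdots u_n\,du$. Finally $u_k\mapsto u_k^2$ identifies the last integral with $2^{-n}$ times the volume $r^{2n}/n!$ of the simplex $\{w\ge 0,\ \sum_k w_k<r^2\}$, and collecting the constants gives $V(r)=(2\pi)^n\beta_1\cdots\beta_d\cdot\dfrac{r^{2n}}{2^n\,n!}=\dfrac{\beta_1\cdots\beta_d\,\pi^n}{n!}\,r^{2n}$.

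I do not expect a genuine obstacle: the first case is essentially the assertion that a normal ball in a flat manifold is Euclidean, and the second is a change of variables. The two points requiring a little attention are that $\exp_p$ be an honest isometry rather than merely a diffeomorphism — this is exactly where flatness, hence the explicit local model, enters — and the bookkeeping of the normalizing constants $(2\pi)^n$, $2^{-n}$ and the simplex volume in the second case (equivalently, one may record factor by factor that a ball of radius $r$ about the vertex of the flat $2$-cone of angle $2\pi\beta$ has volume $\pi\beta r^2$).
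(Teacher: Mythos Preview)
Your proof is correct and follows essentially the same approach as the paper. For $p\notin\Delta$ the paper uses the explicit isometry $\Psi$ (which you also mention via the coordinates $\zeta_k=z_k^{\beta_k}/\beta_k$) rather than the abstract flatness argument, and for $p\in\Delta$ the paper simply says ``a straightforward computation'', which is exactly the change of variables you carry out.
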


\begin{proof}
In the first case, if $q=\Psi^{-1}(p)$, then $\Psi$ induces a isometry between $B_{\rm eucl}(q,r)$ and $B(r)$, so we are done. The remaining case follows from a straightforward computation.
\end{proof}

\noindent
We have the following Sobolev inequality for $g_{\beta}$:

\begin{prop}
\label{sobolev2}
Let $B(r)=B(p,r)$ be a ball as above, and let $f$ be a bounded smooth function on $\DD$ with compact support in $\bar B(r)$. Then: 
\[\left(\int_{B(r)} |f|^{\frac{2n}{n-1}}\omc^n \right)^{\frac{n-1}{n}}\le C V(r)^{-1/n}r^2 \int_{B(r)} |\nabla f|^2 \omc^n \]
for some constant $C$ depending only on $n$.
\end{prop}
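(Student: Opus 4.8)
The strategy is to reduce the Sobolev inequality on the cone metric $g_\beta$ to the classical Euclidean Sobolev inequality, using the isometry $\Psi$ of \S\ref{sec:geom}, and then to deal separately with the two cases depending on whether the center $p$ lies on $\Delta$ or not.

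\emph{Case $p\notin\Delta$.} Here $B(r)=B(p,r)$ is a genuine geodesic ball disjoint from $\Delta$, and by Lemma \ref{lem:inj} we have $r<\mathrm{inj}_{g_\beta}(p)$. The map $\Psi$ of \S\ref{sec:geom} is an isometric diffeomorphism from $(\DD,\om_{\rm eucl})$ onto $(\DD,\omb)$, so if $q=\Psi^{-1}(p)$ it carries $B_{\rm eucl}(q,r)$ onto $B(r)$ and intertwines $|f|$, $|\nabla f|_{\omc}$, $\omc^n$ with their Euclidean analogues on $B_{\rm eucl}(q,r)$. Pulling back reduces the assertion to the statement that for a smooth $h$ compactly supported in $\overline{B_{\rm eucl}(q,r)}\subset\R^{2n}$ one has
\[
\Bigl(\int |h|^{\frac{2n}{n-1}}\,d\mathrm{vol}_{\rm eucl}\Bigr)^{\frac{n-1}{n}}\le C\,r^{2-2}\!\!\cdot r^2\!\int |\nabla h|^2\,d\mathrm{vol}_{\rm eucl},
\]
which is the scale-invariant Gagliardo--Nirenberg--Sobolev inequality on $\R^{2n}$ (exponent $\tfrac{2n}{n-1}=\tfrac{2\cdot 2n}{2n-2}$ is precisely the Sobolev conjugate of $2$ in dimension $2n$); since $V(r)=\frac{\pi^n}{n!}r^{2n}$ by Lemma \ref{lem:vol}, the factor $V(r)^{-1/n}r^2$ is a dimensional constant times $1$, and the inequality follows with $C$ depending only on $n$.

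\emph{Case $p\in\Delta$.} Now $\Psi$ is only a diffeomorphism off $\Delta$ and does not extend across it, so one cannot literally transport the ball. I would instead argue directly. Normalize $p=0$, so $B(r)$ is the region described in \S\ref{sec:geom}. Write $z_k=\rho_k e^{i\theta_k}$ and introduce $t_k=\rho_k^{\beta_k}/\beta_k$ for $1\le k\le d$ and $t_k=\rho_k$ otherwise; in these coordinates $B(r)$ becomes (a quotient by the angular torus of) the Euclidean half-dimensional ball $\sum t_k^2<r^2$, the cone volume form $\omc^n$ becomes a constant multiple (namely $\beta_1\cdots\beta_d$, consistently with Lemma \ref{lem:vol}) of the standard volume element in the $(t_k,\theta_k)$ coordinates, and the gradient norm $|\nabla f|_{g_\beta}^2$ is comparable to the Euclidean gradient norm squared in the same coordinates. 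Thus, after this change of variables, the problem again becomes the scale-invariant Euclidean Sobolev inequality on the ball of radius $r$ in $\R^{2n}$ applied to (the lift of) $f$, and the dimensional constant $V(r)^{-1/n}r^2$ absorbs the extra factor $\beta_1\cdots\beta_d$ into $C=C(n)$. One subtlety: $f$ is only assumed smooth and bounded on $\DD$ with compact support in $\overline{B(r)}$, so near $\Delta$ one should check that the $t$-coordinate lift is Lipschitz with $L^2$ gradient and apply the Sobolev inequality to a standard cutoff/mollification, then pass to the limit; the uniform bounds make this routine.

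\emph{Main obstacle.} The delicate point is the case $p\in\Delta$: one must make precise the change of variables near the singular locus and verify that, despite the degeneracy of the coordinate map along $\Delta$, the pulled-back function still lies in the Sobolev space on which the Euclidean inequality is available, and that the comparison of gradient norms and volume forms holds up to constants depending only on $n$ and the $\beta_k$ (and that the $\beta_k$-dependence cancels against $V(r)^{-1/n}r^2$). An alternative that avoids the coordinate bookkeeping would be to invoke a general Sobolev inequality valid on any complete length space with the measure-contraction / volume-doubling properties that $(\CC^n\setminus\Delta,g_\beta)$ enjoys; but since the explicit model $\om_{\rm eucl}$ is available via $\Psi$ away from $\Delta$ and via the $t$-coordinates at $\Delta$, the direct computation is the cleanest route.
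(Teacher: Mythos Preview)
Your treatment of the case $p\notin\Delta$ is the same as the paper's and is correct: $\Psi$ is an honest isometry on geodesic balls, so the Euclidean Sobolev inequality transfers verbatim and $V(r)^{-1/n}r^2$ is a dimensional constant.

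In the case $p\in\Delta$, however, your polar change $t_k=\rho_k^{\beta_k}/\beta_k$ does \emph{not} reduce the problem to the Euclidean Sobolev inequality. In $(t_k,\theta_k)$ coordinates the cone metric becomes $\sum_{k\le d}\bigl(dt_k^2+\beta_k^{\,2}t_k^{\,2}\,d\theta_k^{\,2}\bigr)+\sum_{k>d}|dz_k|^2$, so only the radial part is Euclidean; the angular factor $\beta_k^{\,2}$ makes $|\nabla f|_{g_\beta}^2$ merely \emph{one-sidedly} comparable to the Euclidean gradient norm, with the discrepancy controlled by $\min_k\beta_k^{\,2}$ or $\max_k\beta_k^{-2}$. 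When $\beta_k\in(0,1)$ the inequality happens to go the right way and the constants do cancel against $V(r)^{-1/n}r^2$, but for $\beta_k>1$ (the regime explicitly allowed in \S\ref{sec:geom} and needed for $\omb=\pi^\ast\omc$ in Remark~\ref{rem:sob}) it goes the wrong way and your claimed cancellation fails. So as written the argument is incomplete precisely in the situation that matters for the paper.

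The paper's proof sidesteps this by using the \emph{same} map $\Psi$ in the case $p\in\Delta$ as well: $\Psi$ is still an isometry off $\Delta$, so gradient norms and volume forms match exactly, and the only cost is that $\Psi^{-1}(B(0,r))$ is an ellipsoid $\{\sum_k (c_k^{\beta_k}r_k/\beta_k)^2+\sum_{k>d}r_k^2<r^2\}$ rather than a round ball. A linear rescaling maps this ellipsoid to the round ball $B_{\rm eucl}(r)$, and the Jacobian of that rescaling is exactly $\prod_k\beta_k^2c_k^{-2\beta_k}=\prod_k\beta_k$; inserting this factor on both sides of the standard Euclidean Sobolev inequality and unwinding gives the constant $(\prod_k\beta_k)^{-1/n}$, which is precisely $C(n)\,V(r)^{-1/n}r^2$ by Lemma~\ref{lem:vol}. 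The passage across $\Delta$ (your ``subtlety'') is handled by an explicit family of cut-off functions $\chi_\varepsilon$ with $\|\nabla\chi_\varepsilon\|_{L^2}\to 0$, rather than by mollification in the $t$-variable.
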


\noindent
Of course, the gradient is computed here with respect to $g_{\beta}$, so that the term inside the integral of the right had side equals (up to a constant) $df\wedge d^c f \wedge \omc ^{n-1}$.

\begin{proof}
We first consider the case $p\notin \Delta$ so that $B(r)$ is a geodesic ball for $(\DD, \omc)$.
Then if $q=\Psi^{-1}(p)$, $\Psi$ induces a isometry between the euclidian ball $B_{\rm eucl}(q,r)$ and $B(r)$. Therefore Sobolev inequality for euclidian balls gives immediately Sobolev inequality for conic balls (as $V(r)^{-1/n}r^2$ is a universal constant in that case). \\

We treat next the complementary case $p\in \Delta$, so that by definition 
$$B(r)=B(0,r)=\{z\in \DD; \, (r_1^{\beta_1}/\beta_1)^2+\cdots +  (r_d^{\beta_d}/\beta_d)^2 +\sum_{k>d} r_k^2 < r^2 \}.$$ 
Then $B'(r):=\Psi^{-1}(B(r))= \{z\in \DD; \, 
(c_1^{\beta_1} r_1/\beta_1)^2+\cdots +  (c_d^{\beta_d}r_d /\beta_d)^2 
 +\sum_{k>d} r_k^2 < r^2 \}$. 

But we know that Sobolev inequality (for the euclidian metric) holds for domains like $\DD$ or $B(r)$. Indeed, $\Delta \subset \D^n$ admits a family of cut-off functions $\chi_{\ep}$ such that $||\nabla \chi_{\ep}||_{L^2}$ goes to $0$ when $\ep \to 0$; for instance one may take $\chi_{\ep}=\xi_{\ep}(\log \log \frac{1}{r_1 \cdots r_d})$, where $\xi_{\ep}$ is zero on $[0,\frac{1}{\ep}]$, equals one on $[\frac{1}{\ep}+1, +\infty[$, and is affine $[\frac{1}{\ep},\frac{1}{\ep}+1]$. \\

Therefore, if $F= \Psi^*f$, and $\om_{\rm eucl}$ denotes the euclidian metric, then performing a linear change of variable twice yields:

\begin{eqnarray*}
\left(\int_{B(r)} |f|^{\frac{2n}{n-1}}\omc^n \right)^{\frac{n-1}{n}} & = & \left(\int_{B'(r)} |F|^{\frac{2n}{n-1}}\om_{\rm eucl}^n \right)^{\frac{n-1}{n}} \\
& = & \left( \prod_k \beta_k^2 c_k^{-2\beta_k} \right)^{\frac{n-1}{n}}\left(\int_{B_{\rm eucl}(r)} |F|^{\frac{2n}{n-1}}\om_{\rm eucl}^n \right)^{\frac{n-1}{n}} \\
& \le & C \left( \prod_k \beta_k^2 c_k^{-2\beta_k} \right)^{\frac{n-1}{n}} \int_{B_{\rm eucl}(r)} |\nabla F|^2 \om_{\rm eucl}^n \\
& = & C\left( \prod_k \beta_k^2 c_k^{-2\beta_k} \right)^{-1/n}  \int_{B'(r)} |\nabla F|^2 \om_{\rm eucl}^n \\
 & = & C\left( \prod_k \beta_k^2 c_k^{-2\beta_k} \right)^{-1/n} \int_{B(r)} |\nabla f|^2 \omc^n \\
 & = & C' V(r)^{-1/n} r^2\int_{B(r)} |\nabla f|^2 \omc^n
\end{eqnarray*}
as $c_{k}^{-2\beta_k}= \beta_k^{-1}$ and $V(r)= C(n) \, \beta_1 \cdots \beta_k r^{2n}$.
\end{proof}

\begin{rema}
\label{rem:sob}
This inequality will still hold \textit{with the same constant} $C$ if we replace the metric $\omc$ by $i\sum_{k=1}^d \mu_k |z_k|^{2(\beta_k-1)} dz_k \wedge d \bar z_k +i\sum_{k=d+1}^n dz_k \wedge d \bar z_k $ for some positive constants $\mu_k$. Indeed, if we change $c_k$ into $\mu_k^{-1/2\beta_k} c_k$ in the definition of $\Psi$, we still get an isometry between the rescaled metric and the euclidian one, and the same volume estimate as in Lemma \ref{lem:vol} remains true; therefore the previous argument can be run again. This observation is important as we will eventually apply this particular Sobolev inequality to the pull-back of $\omc$ by the covering map, cf next section. We should also notice that this inequality gets sharper than the standard Sobolev inequality as the angles tend to $+\infty$, which will precisely be our case later in the proof.
\end{rema}

\subsection{Conic Harnack inequality}
In this subsection we establish the main technical tool of the proof, namely the Harnack inequality
in conic setting. A first step is to show that one can perform integration by parts 
with respect to the cone metric.

\subsubsection{Integration by parts}
\label{sec:ipp}

The context is the following: let $u,v$ be bounded smooth functions on $\DD$ that can be written as differences of functions whose conic laplacian is uniformly bounded from below, i.e. $u=u_1-u_2$ with $\Delta_{\omc} u_i \ge -C$ on $\DD$, and similarly with $v$. Let us set $T= \omc^{n-1}$, which is a $(n-1, n-1)$ closed positive current on $\D$. 

\noindent
Then $u \, \ddc v \wedge T$ and $du \wedge d^c v \wedge T$ are two smooth currents on $\DD$ of degree $2n$, and can be viewed as complex measures on that set. 

\begin{prop}
\label{ipp}
These two currents have finite mass near $\Delta$, and if $\eta$ is a cut-off function with compact support in $\D^n$, then:
\[\int_{\DD} \eta u \, \ddc v \wedge T = -\int_{\DD} d(\eta u) \wedge d^c v \wedge T \]
\end{prop}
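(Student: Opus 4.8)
The strategy is to approximate: we cannot integrate by parts directly because $u,v$ are not smooth across $\Delta$, but $T=\omc^{n-1}$ puts little mass near $\Delta$, so we can use a family of cut-off functions that localize away from $\Delta$ and show all the error terms vanish in the limit. First I would record the finiteness of mass: since $T$ has bounded coefficients in the variables $\xi_k$ (indeed $T=\omc^{n-1}$ is, up to bounded factors, $\prod_k |z_k|^{2(\beta_k-1)}$ times a smooth form) and $\ddc v$ has coefficients bounded \emph{after} pairing with the $\xi_k$'s — because $v=v_1-v_2$ with $\Delta_{\omc}v_i\ge -C$ and $v_i$ bounded, so $\trom_{\omc}\ddc v_i$ is a difference of measures of locally finite mass — one checks that $u\,\ddc v\wedge T$ and $du\wedge d^cv\wedge T$ are complex measures of finite total variation near $\Delta$. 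The key point is that the singular factor of $T$ is exactly compensated by the conic normalization built into $\Delta_{\omc}$, so what remains is an $L^1$ density against Lebesgue measure times a bounded function.

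Next I would introduce the cut-off functions $\chi_{\ep}$ adapted to $\Delta$ exactly as in the proof of Proposition \ref{sobolev2} (e.g. $\chi_\ep = \xi_\ep(\log\log\frac{1}{r_1\cdots r_d})$), which satisfy $\chi_\ep \to 1$ pointwise on $\DD$, $0\le \chi_\ep\le 1$, $\chi_\ep \equiv 0$ near $\Delta$, and crucially $\int_{\DD} d\chi_\ep\wedge d^c\chi_\ep\wedge \omc^{n-1}\to 0$ as $\ep\to 0$. For fixed $\ep$, the function $\eta u \chi_\ep$ is Lipschitz with compact support in $\DD\setminus\Delta$ (approximating $u$ itself by smooth functions if one wants honest smoothness there, using that $u$ is $\mathscr C^2$ off $\Delta$), so Stokes' theorem applies on $\DD\setminus\Delta$ and gives
\[\int_{\DD}\eta u\chi_\ep\,\ddc v\wedge T = -\int_{\DD} d(\eta u\chi_\ep)\wedge d^cv\wedge T.\]
Expanding $d(\eta u\chi_\ep) = \chi_\ep\, d(\eta u) + \eta u\, d\chi_\ep$, the identity to prove will follow once I show that $\int_{\DD}\eta u\, d\chi_\ep\wedge d^c v\wedge T \to 0$ and that the two ``main'' terms converge to their counterparts without $\chi_\ep$ by dominated convergence (legitimate thanks to the finite-mass statement of the first step).

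The main obstacle — and the only genuinely non-routine point — is controlling the cross term $\int_{\DD}\eta u\, d\chi_\ep\wedge d^c v\wedge T$. Since $u$ and $\eta$ are bounded, by Cauchy–Schwarz (with respect to the positive form $T=\omc^{n-1}$ on the region where $d\chi_\ep\neq 0$, i.e. a shrinking neighborhood $U_\ep$ of $\Delta$) this is bounded by
\[C\Big(\int_{U_\ep} d\chi_\ep\wedge d^c\chi_\ep\wedge T\Big)^{1/2}\Big(\int_{U_\ep} dv\wedge d^cv\wedge T\Big)^{1/2}.\]
The first factor tends to $0$ by the defining property of $\chi_\ep$; for the second factor one needs $dv\wedge d^cv\wedge T$ to have locally finite mass, i.e. $|\nabla_{\omc} v|^2\in L^1_{\mathrm{loc}}(\omc^n)$ near $\Delta$ — this is where the hypothesis $\Delta_{\omc}v_i\ge -C$ with $v_i$ bounded is used again, via the standard integration-by-parts / Chern–Levine–Nirenberg type estimate $\int \phi\, dv\wedge d^cv\wedge T \le C(\|v\|_\infty, \sup|\Delta_{\omc}v_i|, \phi)$ valid for bounded $\phi$ — and is precisely the ``subharmonic with respect to a conic metric rather than plurisubharmonic'' subtlety mentioned in the introduction. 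Once this energy bound is in place the cross term is $o(1)$ and passing to the limit $\ep\to 0$ yields the asserted formula.
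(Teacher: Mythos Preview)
Your overall architecture (cut-offs $\chi_\ep$ vanishing near $\Delta$, Stokes on $\eta u\chi_\ep$, Cauchy--Schwarz on the cross term) matches the paper's Step~3 almost exactly, and your choice of cut-off functions is the same one the paper uses. The gap is in the preliminary finite-mass and finite-energy claims, which you treat as routine but which are in fact the heart of the proposition.

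Concretely: the hypothesis gives only a \emph{one-sided} bound $\Delta_{\omc}v_i\ge -C$. You write that ``$\tr_{\omc}\ddc v_i$ is a difference of measures of locally finite mass'' and later invoke a ``standard CLN type estimate $\int\phi\,dv\wedge d^cv\wedge T\le C(\|v\|_\infty,\sup|\Delta_{\omc}v_i|,\phi)$''---but $\sup|\Delta_{\omc}v_i|$ is not assumed finite, and the positive part of $(\Delta_{\omc}v_i)\,\omc^n$ could a priori have infinite mass near $\Delta$ (e.g.\ $v=|z|^{2\alpha}$ with $0<\alpha<\beta$ in dimension one has $\Delta_{\omc}v=\alpha^2|z|^{2(\alpha-\beta)}\to\infty$). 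The CLN/Bedford--Taylor machinery you appeal to is built for \emph{plurisubharmonic} functions; here $v_i$ is only subharmonic for the singular metric $\omc$, and the paper explicitly flags this as the main difficulty, noting that one cannot invoke \cite[Theorem~1.14]{BEGZ} directly. Your Cauchy--Schwarz bound on the cross term then becomes circular: it needs $\int dv\wedge d^cv\wedge T<\infty$, which you justify via an integration-by-parts estimate that is precisely what the proposition is establishing.

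The paper fills this gap by a regularization (its Step~2): set $u_\ep=\max_\ep(u+\ep\log\prod|z_j|,\,-M)$, which is $\omc$-subharmonic, equals $-M$ near $\Delta$, and can be smoothed (via Greene--Wu) while preserving subharmonicity. For $u_\ep$ the mass $\int\eta\,\ddc u_\ep\wedge T$ can be bounded by a legitimate double integration by parts against $\ddc(\eta\chi_\delta)$, using that $|u_\ep|\le C-\ep\log\prod|z_j|$ together with the quantitative properties of $\chi_\delta$ (including the bound on $\|\log(\prod|z_j|)\,\ddc\chi_\delta\wedge T\|$). Passing $\ep\to 0$ gives finite mass for $\ddc u\wedge T$; applying this to $u^2$ (subharmonic when $u\ge 0$ is) yields the energy bound $\int du\wedge d^cu\wedge T<\infty$. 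Only after this does the cut-off argument you wrote go through. You should insert this regularization step before your Stokes/Cauchy--Schwarz argument.
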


\begin{proof}
The main difficulty here is that we do not deal with quasi psh function but only quasi subharmonic functions with respect to a singular metric. Because of this, we cannot use directly e.g. \cite[Theorem 1.14]{BEGZ}. However, in order to establish this kind of results, the key point is to have a regularization procedure. We treat this in detail along the following subsections.\\

\noindent
\textbf{Step 1. The cut-off function.}

\noindent
Fortunately, the cone geometry is rather well understood, and we have at our disposal nice cut-off functions as shown in \cite[\S 9]{CGP}. Let us recall briefly their construction. Let $\rho = \log (- \log \prod |z_j|^2)$, and, for all $\delta>0$, $\xi_{\delta}: \R_+\to \R_+$ be a smooth function equal to $0$ on $[0, 1/\delta]$ and to $1$ on $[1+1/\delta, +\infty[$ (and always between $0$ and $1$). We set $\chi_{\delta}=1- \xi_{\delta}(\rho)$, it is $1$ whenever $\prod |z_j| \ge e^{-e^{1/\delta}}$ and $0$ if $\prod |z_j| \le e^{-e^{1+1/\delta}}$. Then one can check as in \cite{CGP} that: 
\[\left|\! \left| \log \left(\prod |z_j| \right)\, \ddc \chi_{\delta} \wedge T \right | \! \right |_{\D^n} < +\infty\]
and
\[\int_{\D^n}  d \chi_{\delta} \wedge d^c \chi_{\delta} \wedge T \, \,  \underset{\delta \to \, \,  0}{\longrightarrow} 0 \]
In other words, the gradient of the cut-off function tends to $0$ with respect to the $L^2$ topology induced by $\omc^n$. \\

\noindent
\textbf{Step 2. The regularization.}

\noindent
In this section, we assume that the function $u$ is $\omc$-subharmonic, i.e. $\ddc u \wedge T \ge 0$ (we always work outside of $\Delta$). We want to show that this positive current has finite mass on $X$. To do that, we need to regularize $u$. First, we define $v_{\ep}= u+ \ep \log \prod |z_j|$. In that way we made $u$ extend continously (with value $-\infty$) to the whole of $\D$. To make it smooth and still preserving its subharmonicity, we set $u_{\ep}= \max_{\ep}(v_{\ep}, -M)$ where $M> \sup |u|+1$, and $\max_{\ep}$ means a regularized maximum. In that way, $u_{\ep}$ equals $-M$ near $\Delta$, and a bit further, we convolute it with a smoothing kernel. This operation will preserve the subharmonicity as $\omc$ is smooth on $\DD$. Indeed, the maximum of two $\omc$-subharmonic functions is still $\omc$-subharmonic (write the maximum function as a supremum of affine functions and use the characterization of weak subharmonic functions) and it is continuous; therefore we may use for instance the results of \cite[Corollary 1, p. 66]{GW2} to regularize it.
In the end, we obtain a smooth $\omc$-subharmonic function $u_{\ep}$ that converges smoothly to $u$ on each compact subset of $\DD$ (this is still a consequence of \cite{GW2}). Moreover
\begin{eqnarray*}
\int_{\DD} \eta \, \ddc u_{\ep} \wedge T & = & \lim_{\delta \to 0} \int_{\D^n}\eta \chi_{\delta} \, \ddc u_{\ep} \wedge T\\ 
&= &  \lim_{\delta \to 0} \int_{\D^n} \uep \, \ddc (\eta\chid) \wedge T 
\end{eqnarray*}
As $|\uep | \le C - \ep \log \prod |z_j|$, then using Cauchy-Schwarz inequality, we can dominate the previous quantity up to a constant by
\[|| \ddc \chid \wedge T|| +||T||+||d\chid \wedge d^c\chid \wedge T||^{1/2}+ \ep ||F \ddc \chid \wedge T||+\] 
\[\ep ||F\, T|| + \ep ||d\chid \wedge d^c\chid \wedge T||^{1/2} || F^2 \, T||^{1/2}\]
where $F=\log \prod |z_j|$ and the norms are taken over $\D^n$. Using the properties of $\chid$ recalled above and the fact that $FT$ as well as $F^2T$ have finite mass (this is a straightforward computation), we conclude that the mass of $\eta \ddc \ue \wedge T$ on $\DD$ is uniformly bounded when $\ep$ goes to $0$. As this currents converge (smoothly) to $\eta \ddc u \wedge T$ on $\DD$, we infer that this last current has finite mass on $\DD$: 
\[\int_{\DD} \eta \,\ddc u \wedge T < +\infty\]
As a consequence, $u$ has a gradient in $L^2$: 
\[\int_{\DD} \eta \, du \wedge d^c u \wedge T < +\infty.\]

Indeed, assume $u$ non-negative. Then $u^2$ is suhbarmonic and
$\ddc u^2 = 2u \ddc u +2 du \wedge d^c u$. As $u$ is bounded, $u \ddc u \wedge T$ has finite mass, and so does $\ddc u^2 \wedge T$ as we have showed above; this proves the claim.

We note that these results hold more generally if the function $u$ is a difference of quasi-subharmonic functions. Indeed, if $u$ satisfies merely $\Delta_{\omc}u \ge -C$, then if $u_0:=A |z_1|^{2\beta_1}$, the function $u+u_0$ is subharmonic for $A$ big enough, hence $\ddc u \wedge T =\ddc (u+u_0) \wedge T - \ddc u_0 \wedge T $ is a difference of measures with finite total mass, so it has finite mass too. A similar result holds for differences of quasi-subharmonic functions.\\

\noindent
\textbf{Step 3. Integration by parts.}

\noindent
We consider two (non-negative) $\omc$-subharmonic functions $u,v$ as in the statement \ref{ipp}. 
We know that $u \ddc v \wedge T$ and $du \wedge d^c v \wedge T$ have finite norms on $X$. More precisely: 
\begin{eqnarray*}
 \int_{\D^n}\eta \chi_{\delta}u \ddc v \wedge T &= & -\int_{\D^n} d(u \eta\chid) \wedge d^c v \wedge T \\
 & = & -\int_{\D^n}  \chid \, d(\eta u) \wedge d^c v \wedge T - \int_{\D^n}  \eta u \, d\chid \wedge d^c v \wedge T
\end{eqnarray*}
But the last integral is controlled by $||u||_{\infty} ||\nabla v||^{1/2}||\nabla \chid ||^{1/2}$ which tends to $0$ when $\delta \to 0$. This concludes the proof of Proposition \ref{ipp}.

\end{proof}

\begin{rema}
We should notice that we do not really use the precise expression of the cone metric in the previous arguments. The proof equally works for instance for the pull-back $\omb$ of the cone metric $\omc$: actually we only needed the metric to have bounded potentials and that $\Delta \subset \D^n$ admits a cut-off function whose gradient with respect to the metric is small in $L^2$ norm.
\end{rema}

\subsubsection{Harnack inequality}


As we will see at the end of the proof of Theorem B, it is not enough to establish the conic version of Harnack inequality for geodesic balls. It is indispensable to show it for balls centered at a point of the divisor; this is the content of the current subsection. 
\smallskip

\noindent
We recall that we have introduced in \ref{sec:geom} two kinds of balls $B(r)$ that are either geodesic balls for $(\DD, \omc)$ or "balls" centered at a point of $\Delta$. We have denoted by $V(r)=\mathrm{vol}_{\omc}(B(r))$ the volume of these balls, which is explicitly computed in Lemma \ref{lem:vol}. In this setting, we have the following Harnack inequality.

\begin{theo}
\label{harnack}
Let $B(r)$ be any ball as above such that $B(3r) \subset \DD$. Assume that $v$ is a bounded non-negative smooth function on $\DD$ satisfying the inequality
\[\Delta_{\omc} v \le \theta\]
for some bounded smooth function $\theta$. Let $q>n$; then there exists $p>0$ such that:
\[V(r)^{-1/p} ||v||_{L^p(B(2r))} \le C \left( \inf_{B(r)} v + r^2\, V(r)^{-1/q} ||\theta||_{L^q(B(2r))}\right)\]
where $C$ is a constant depending only on $n,p,q$ and $\omc$.
\end{theo}

The $L^p$ spaces involved here are defined using the volume form induced by $\omc$, i.e. $\omc^n/n!$. This inequality also holds for any metric $\om$ quasi-isometric to $\omc$ (cf Remark \ref{rem:har}); this will be important in what follows. Let us also stress that the key point is that this estimate is uniformly satisfied for all ball $B(3r)\subset \DD$, independently of their radius.


\begin{proof}
We will essentially follow \cite[\S 5, p 107-113]{Siu} and \cite[Theorem 4.15]{HL97}. There are essentially three important facts in the standard case that need to be modified in order to accommodate them to our degenerate setting.

\begin{itemize}
\item In the usual proofs of this inequality, only the case $r=1$ is treated as the general case can be deduced from this one by a change of variable. Here we could use this idea using a suitable change of variable, but it would only work for the balls centered at the divisor, and even in that case, it would not really simplify the proof, so we decided to choose the uniform framework of radius $r$ balls.
This requires a finer control on the constants involved; the precise form of Sobolev inequality that we obtained in \ref{sobolev2} will be crucial to get uniform estimates valid even when $p,q$ go to infinity. 
\item We also need to explain why we are allowed to perform the integrations by parts for the balls centered at a point of $\Delta$ (this will be a consequence of Proposition \ref{ipp}).

\item Finally we have to use Sobolev inequality in our context and also we have to explain how to avoid the use of John-Nirenberg inequality --since this later ingredient of the classical case proof does not seem to be
obvious to establish in the conic setting.\\
\end{itemize}

\noindent 
To start with, we observe that one can always find a cut-off function $\eta$ with support in $B(R)$
and which equals 1 on $B(r)$ such that $|D \eta  |  \le C (R-r)^{-1}$. This is clear for the geodesic balls working in the normal coordinates given by Gauss lemma, and this is not much more difficult for the balls centered at $0$. 

We will always consider \textit{normalized measures}, i.e. every $L^p$ spaces and norms over $B(r)$ that we will consider will be taken with respect to $d\mu_r:=dV_{\omc}/V(r)$. To get a normalized Sobolev inequality, we multiply each side of inequality \eqref{sobolev2} by $V(r)^{\frac{1}{n}-1}$; the resulting inequality reads as follows
\begin{equation}
\label{sobolev3}
||f||_{L^{\frac{2n}{n-1}}}^2 \le C r^2 ||\nabla f||_{L^2}^2.
\end{equation}
\vspace{1mm}

$\bullet$ Let $A$ be the $L^q$ norm of $\theta$ over $B(2r)$, $w=v+r^2A$, and $\nu>0$. 
A straightforward computation using the elliptic inequality satisfied by $w$ leads to 
\begin{equation}
\label{eq:in}
\Delta_{\omc} w^{-\nu} \ge - \nu \xi w^{-\nu}
\end{equation}
where $\xi=\theta/(v+r^2 A)$, so that $||\xi||_{L^q} \le r^{-2}$.

Let $r<r_1<r_2< 2r$ be two numbers to be determined later. We choose a cut-off function $\eta$ equal to $1$ on $B(r_1r)$ and with support on $B(r_2r)$. Multiplying both sides of \eqref{eq:in} by $\eta^ 2 w^ {-\nu}$ and integrating by parts (as Proposition \ref{ipp} allows us to do, since the $\omc$-laplacian of this function is bounded from below), we get:
\[ \int_{B(2r)} \eta^ 2 |\nabla w^ {-\nu}|^2 d\mu_r+\int_{B(2r)}2\eta w^{-\nu} \nabla \eta \cdotp \nabla  w^{-\nu} d\mu_r \le  \int_{B(2r)} \nu \xi \eta^2 w^{-2\nu} d\mu_r \]
As a consequence, we obtain:
\[||D(\eta w^{-\nu})||_{L^2}^2 \le C \left( ||w^{-\nu} D\eta|||_{L^2}^2 + ||\nu \xi \eta^2 w^{-2\nu}||_{L^1}  \right)\]
Then using Sobolev \eqref{sobolev3} and Hölder's inequalities, we infer
\begin{equation}
\label{eq0}
||\eta w^{-\nu}||^2_{L^{\frac{2n}{n-1}}} \le C r^2 \left( ||w^{-\nu} D\eta||_{L^2}^2 + \nu || \xi||_{L^ q} || \eta w^{-\nu}||^ 2_{L^{\frac{2q}{q-1}}} \right). 
\end{equation}
Now an interpolation inequality yields for every $\ep>0$ the following relation
\[ || \eta w^{-\nu}||^ 2_{L^{\frac{2q}{q-1}}} \le 2 \ep^ 2   || \eta w^{-\nu}||^ 2_{L^{\frac{2n}{n-1}}} +2\ep ^ {-\frac{2n}{q-n}}||\eta w^{-\nu}||^2_{L^{2}} ;\]
we choose $\ep$ so that $C r^2 \nu || \xi||_{L^ q}  2 \ep^ 2 =1/2$, and plug it in the previous inequality. Combining this with \eqref{eq0}, we obtain:
\[||\eta w^{-\nu}||^2_{L^{\frac{2n}{n-1}}} \le 2 Cr^2  ||w^{-\nu} D\eta|||_{L^2}^2 + \left(4 \nu Cr^2 ||\xi||_{L^q}\right)^{\frac{q}{q-n}}||\eta w^{-\nu}||^2_{L^{2}}. \]
Since we have $|D\eta| \le C r^{-1}(r_2-r_1)^{-1}$ and $||\xi||_{L^q} \le r^{-2}$, we obtain:
\begin{equation}
\label{har0}
||w^{-\nu}||_{L^{\frac{2n}{n-1}}(B(r_1r))} \le C \frac{(1+\nu)^{\frac{q}{2(q-n)}}}{r_2-r_1}||w^{-\nu}||_{L^{2}(B(r_2r))}
\end{equation}
where $C$ \textit{does not depend on} $r$. Take any $p>0$. After choosing these numbers 
in an appropriate way and iterating the process (cf. \cite[p. 110]{Siu}), we end up with:
\begin{equation}
\label{har1}
\sup_{B(r)} w^ {-1} \le C ||w^{-1}||_{L^p(B(2r))}
\end{equation}
If we can show that there exists $p_0>0$ such that 
$$||w^{-1}||_{L^{p_0}(B(3r))} \le C||w||_{L^{p_0}(B(3r))}^ {-1},$$ 
then by combining this inequality with \eqref{har1} we obtain $\inf_{B(r)} w \ge C^{-1}||w||_{L^{p_0}(B(3r))}$ and therefore it would end the proof as $w=v+r^2 ||\theta||_{L^q}$.\\

So we need now to show the existence of $p_0>0$ such that $||w^{-1}||_{L^{p_0}(B(3r))} \le C||w||_{L^{p_0}(B(3r))}^ {-1}$. Usually, the proof of this estimate involves the John-Nirenberg inequality (cf \cite{Siu, Gilb}), which in our context does not seem to be an easy fact to prove. Fortunately, 
it turns out that it is possible to avoid using it: as the argument given in \cite[Theorem 4.15, pp 98-103]{HL97}) shows it, one can obtain the desired estimate by only using Hölder's, Young's, Poincaré's and Sobolev's inequalities.

For some good reasons that we have already invoked, we have to work on the ball $B(r)$ (and not $B(1)$), so we will briefly indicate next the necessary modifications we have to operate with respect to the proof of \cite[Theorem 4.15, pp 98-103]{HL97}.\\

$\bullet$ Set $\psi:=\log w-\ib \log w$ (here again the measure is normalized). It is enough for our purpose to show that for some $p_0>0$, $\ib e^{p_0 |\psi|} \le C$ for some $C$ independent of $r$. Therefore we have to estimate $\ib |\psi|^{\gamma}$ for all positive integer $\gamma$. 

The first step of \cite{HL97} can be adapted with (almost) no modification to guarantee that $\ib \psi^2 \le C$. Then we have to choose as a test function $\eta^2 |\psi|^{2 \gamma}$, where $\eta$ is a cut-off function as above and $\gamma \ge 2$ is an integer. We can use the exact same arguments of \cite[pp100-101]{HL97} to end up with:
\[ ||D(\eta |\psi|^{\gamma})||_{L^2}^2 \le C \left [ (2\gamma)^{2\gamma} ||\,\eta |D\psi|\,||_{L^2}^2+\gamma^2 ||\, |D\eta| |\psi|^{\gamma} ||_{L^2}^2+\gamma ||\xi||_{L^q}||\eta |\psi|^{\gamma}||_{L^{\frac{2q}{q-1}}}^2\right] \]
the norms being taken over $B(r)$. By interpolation and Sobolev inequalities, we obtain for every $\ep>0$:
\[ ||\eta |\psi|^{\gamma}||_{L^{\frac{2q}{q-1}}}^2 \le Cr^2 \ep^ 2||D(\eta |\psi|^{\gamma})||_{L^2}^2 +C'\ep^ {\frac{-2n}{q-n}} ||\eta |\psi|^{\gamma}||_{L^2}^2  \]
Choosing a suitable $\ep$ (the same as before actually), we get:
\[ \int_{B(r)} |D(\eta |\psi|^{\gamma})|^2 \le C  \left[ (2\gamma)^{2\gamma} \int_{B(r)} |D\eta|^2+\gamma^{\alpha}\int_{B(r)} (|D\eta|^2 + ||\xi||_{L^q} \eta^2) |\psi|^{2\gamma} \right] \]
for some constant $\alpha>0$ depending only on $n,q$. If we apply now Sobolev inequality to $\eta |\psi|^{\gamma}$ and choose $\eta$ to be a cut-off function for $B( r_1 r) \subset B( r_2 r)$ with the same $r_i$'s as before, we end up with:
\[ \left(\int_{B( r_1r)} |\psi|^{\frac{2\gamma n}{n-1}}\right)^{\frac{n-1}{n}} \le \frac{C \gamma^{\alpha}}{(r_2-r_1)^2}\left((2\gamma)^{2\gamma} + \int_{B( r_2r)} |\psi|^{2\gamma} \right) \]
as the constant in the renormalized Sobolev inequality \eqref{sobolev3} behaves like $r^2$ and $||\xi||_{L^q} \le r^{-2}$, an iteration of the process yields
\[ \int_{B( r)} |\psi|^{\gamma} \le (Ce)^{\gamma} \gamma! \]
for all integers $\gamma \ge 1$, hence choosing $p_0=(2Ce)^{-1}$ gives
$\int_{B( r)} e^{p_0 |\psi|} \le 2$ which concludes the proof. 
\end{proof}

\begin{rema}
\label{rem:har}
The previous Harnack inequality holds \textit{with a uniform constant} for all angles $\beta_k \in (0, +\infty)$ as well for the pull-back $\omb$ of $\omc$, as the only ingredients that we used in the proof were: Sobolev inequality (in its refined form given by Proposition \ref{sobolev2}), integration by parts outside of $\Delta$ and also the existence of appropriate cut-off functions. All these properties have been previously shown to be uniformly satisfied by $\omc$ or $\omb$ and more generally by any metric which is quasi-isometric to $\omb$ (cf Remark \ref{rem:sob}), so Harnack inequality will hold true uniformly for all these metrics. 
\end{rema}

\subsection{Evans-Krylov's argument.}
In this part, we will assume that the angles $\beta_k$ are rational numbers. This is needed in order to use the branched covers introduced above.

The usual Evans-Krylov method provides an inequality of the form $\om(R) \le C R^{\alpha}$ where $\om(R)$ is the oscillation on a (geodesic) ball of radius $R$ of the function which we want to prove to be Hölder continuous. This is sufficient to prove the Hölder estimate provided that any two points within distance $R$ are contained in a geodesic ball of radius proportional to $R$. However, this is not the case for the cone geometry (think of two points $x,y \in \DD$ such that $d(x,y) \gg d(x,\Delta)$ and consider Lemma \ref{lem:inj}). It is precisely for this reason that 
from the very beginning of our proof we have considered not only geodesic balls but also balls centered at a point of $\Delta$. \\

Let now $R>0$ be a positive number; from now on we work in a ball $B(R)$ as in the previous section (either a geodesic ball away or a ball centered at $0$).
The usual Evans-Krylov argument consists in combining the concavity of $\log \det$ with Harnack inequalities obtained from the linearization of the (MA) equation. However, it is crucial that the rhs of the MA equation has $\mathscr{C}^2$ regularity and that the solution is "uniformly strictly psh". In our case, these conditions are not fulfilled. The trick consists to work with the twisted differential operators corresponding to the differentiation with respect to the twisted vector fields $X_k$ introduced in section \ref{sec:tw}:
\begin{equation*}
 \dbj = 
\begin{cases}
\frac{1}{q_k} w_k^{1-p_k} \frac{\d}{\d w_k} & \text{if $k=1 \ldots d$} \\
\frac{\d}{\d w_{k}} & \text{if not}
\end{cases} 
\end{equation*} 
and similarly for $\dbkb$. We also define $\db$ and $\dbb$ by $\db f = \sum_k (\dbj f) dw_k$ and $\dbb f =\sum_k (\dbkb f)  d\bar w_k$. \\

$\bullet$
Recall that the pull-back $u=\pi^* \vp$ of the solution $\vp$ satisfies 
\begin{equation*}
(i\d \bar\d u)^n = \prod_{k=1}^d q_k^2|w_k|^{2(p_k-1)}e^{\mu u+ F}dV
\end{equation*}
where $F=f\circ \pi$.
Using the multilinearity of the determinant, it is an easy exercise to check that this Monge-Ampère equation equivalent to:
\begin{equation}
\label{eq:mad2}
(i\db \dbb u)^n = e^{\mu u+F}dV
\end{equation}
and the last equation looks like a non-singular MA equation. Besides, we already know from the assumptions of the theorem that $u$ is uniformly bounded on $\D^ n$ and that $i\db \dbb u$ is quasi-isometric to the euclidian metric on $\DD$, or equivalently that $ \om_u:=i\d \bar\d u$ is quasi-isometric to $\omb$. If we set $\Phi := \log \det$, and $h= \mu u +F$ (as $\det(dV)=1$) then we have:
\begin{equation}
\Phi(i\db \dbb u)=h \label{eq2} 
\end{equation}
\vspace{0.3mm}

\noindent
What do we know about $h$? Firstly, $F=\pi^*f$ is the pull-back of a function of class $\mathscr C^3$ so its twisted derivatives are of class $\mathscr C^{\alpha}$ for all $\alpha \in (0,1)$. As for $\mu u$, we know that it is bounded, but it actually follows from \cite{Kolo2,DDGHKZ} that $\vp$ is in $\mathscr C^{\alpha}\subset \cab$ for some $\alpha>0$, hence $u\in \mathscr C^{\alpha, \tilde \beta}$. Finally, we know  that $i\d \bar \d u $ is quasi-isometric to $\omb$, and therefore $\db \dbb u$ has uniform $\mathscr C^0$ bounds. \\

$\bullet$ 
We consider next a constant twisted vector field $\gamma$ on $\CC^n$, and we differentiate \eqref{eq2} with respect to $\gamma$ and then again with $\bar \gamma$. 
As the twisted differential operators commute, the concavity of $\Phi$ this leads to
\[{g}^{j \bar k} \db_j  \d^{\beta}_{\bar k} w \ge h_{\gamma \bar\gamma}\]
where $w=u_{\gamma \bar\gamma}$ and $(g_{j \bar k})$ are defined by $i\db\dbb u=\sum_{j,k}g_{j\bar k}\, idz_j \wedge d\bar z_k$. We emphasize that we differentiate with the twisted $\db, \dbb$ operators. More precisely, if $\gamma = \sum \gamma_k X_k$, then $h_{\gamma \bar\gamma}:=\sum_{k,l} (\dbj  \d^{\beta}_{\bar l} h) \gamma_k \bar \gamma_l$. \\
Basic algebraic manipulations (namely that the inverse matrix of $(z_i \bar z_j a_{ij})_{ij}$ is given by $(\bar z_i^{-1} z_j^{-1} a^{ij})_{ij}$) show that the operators $\sum_{k,l}g^{k \bar l} \db_k  \d^{\beta}_{\bar l}$ and  $\Delta_{\om_u} $ coincide, so that we finally end up with:
\begin{equation}
\Delta_{\om_u} w \ge h_{\gamma \bar\gamma} \label{eq:ei3}
\end{equation}
\vspace{1mm}

$\bullet$
The function $\Phi$ is concave on the set of hermitian definite positive matrices. So, if $y\in \DD$, one may apply a convexity inequality at the point $g(y)=i\db \dbb u(y)$, which thanks to \eqref{eq2} yields for all $x\in \DD$:
\[\Phi_{j\bar k}(g(y))(g_{j \bar k}(y)-g_{j \bar k}(x)) \le h(y)-h(x)\]
where $\Phi_{j\bar k}(g(y)) = (\d \Phi / dx_{j\bar k})(g(y))=g^{j \bar k}(y)$. 
As $g(y)$ is quasi-isometric to the euclidian metric (say with eigenvalues in some fixed interval  $[\lambda, \Lambda] \subset \R_+^*$ independent of $y$), one can apply a basic lemma from linear algebra (cf e.g. \cite[(4.3)]{Siu}) to find vectors $(\gamma_{\nu})_{1\le \nu \le m}$ in $\CC^n$ (depending on $y$) and real numbers $(\tau_{\nu})_{\nu}$ with $\tau_{\nu}\in [\lambda, \Lambda]$ such that $g(y)=\sum \tau_{\nu} \gamma_{\nu} \otimes \bar \gamma_{\nu}$. Setting $w_{\nu}:=u_{\gamma_{\nu} \bar \gamma_{\nu}}$, we get from the previous inequality the following one:
\begin{equation}
\label{in:conv}
\sum_{\nu=1}^m \tau_{\nu}(w_{\nu}(y)-w_{\nu}(x))\le h(y)-h(x)
\end{equation}

$\bullet$
We will combine inequality \eqref{in:conv} with Harnack inequality, i.e. Theorem \ref{harnack}, applied to the elliptic inequality \eqref{eq:ei3}, what Remark \ref{rem:har} allows us to do. Let us set, for $s=1,2$, $m_{s\nu}:=\inf_{B(sR)} w_{\nu}$ and $M_{s\nu}:=\max_{B(sR)} w_{\nu}$. Then for each $\nu$, $M_{2\nu}-w_{\nu}$ is non negative and satisfies an appropriate elliptic inequality by \eqref{eq:ei3}. By cleverly combining \eqref{in:conv} with the estimates that Theorem \ref{harnack} will give us, one can carry on the classic arguments to get for any $\nu, R$ and $\alpha \in (0,1)$:
\begin{eqnarray*}
 \left ( \frac{1}{V(R)} \int_{B(R)}(w_{\nu}(y)-m_{2\nu})^ pdy\right)^ {1/p}& \le &
 C \bigg(\om(2R)-\om(R)+\\
 & & + R^ {\alpha} ||h||_{\mathscr{C}^ {\alpha, \tilde \beta}(B(2R))}+R^2 \sup_{B(2R)} ||\db \dbb h||\bigg) 
\end{eqnarray*}
where $\om(sR)=\sum_{\nu=1}^m \mathrm{osc}_{B(sR)} w_{\nu} = \sum_{\nu=1}^m M_{s\nu}-m_{s\nu}$.
But Theorem \ref{harnack} also applies directly to  $M_{2\nu}-w_{\nu}$ to show:
\[  \left(\frac{1}{V(R)} \int_{B(R)}(M_{2\nu} - w_{\nu}(y))^ pdy \right)^ {1/p} \le C \left(M_{2\nu}-M_{1\nu}+R^2 \sup_{B(2R)} ||\db \dbb h||\right) \]
Adding the two previous inequalities and summing them for $\nu=1 \ldots m$ yields:
\begin{equation}
\label{ine:fin}
\om(R) \le \delta \om(2R)+  R^ {\alpha} ||h||_{\mathscr{C}^ {\alpha, \tilde \beta}(B(2R))}+R^2 \sup_{B(2R)} ||\db \dbb h||
\end{equation}
for $\delta =1-1/C$. Using now a standard lemma (cf \cite[Lemma (8.23)]{Gilb}), we infer that $\om(R) \le C R^{\alpha'}$ for some constant $C>0$ and exponent $\alpha'>0$. \\

$\bullet$
We proved that for all $\gamma$ the oscillation of $(\pi^* \vp)_{\gamma \bar \gamma}$ on any ball $B(R)$ (for $\omb$) as before is dominated by $R^{\alpha}$ for some $\alpha>0$.
Thanks to Lemma \ref{lem:func}, this is equivalent to saying that $||\vp||_{\cdab(B(R))} <+\infty$ for all balls $B(R)$ for the cone metric. Now, take two points $p,q$ on $\DD$, and set $R=d_{\beta}(p,q)$. There are two possibilities: either $R< \frac{1}{2}\min( d_{\beta}(p, \Delta), d_{\beta}(q, \Delta))$ in which case $p,q$ belong to $B(p,2R)$ a geodesic ball and we have the desired estimate for $p,q$. Or we are in the second case, and then we pick a point in $\Delta$ (call it $0$) such that for instance $
d_{\beta}(p,0)\le 2R$. Then $d_{\beta}(q,0) < 3R$; so that $p,q$ belong to $B(0, 3R)$ and we can also apply the previous result. So in the end, we showed that $\vp $ is in the class $\cdab$, so that Theorem B is established whenever the coefficients $\beta_k$ are rational numbers. 

\begin{rema}
In reality, to prove that $\vp \in \cdab$ we also need to show that $\d \vp \in \cab$. Let us briefly mention how this can be done: if $u=\pi^*\vp$ and $\gamma$ is a constant twisted vector field on $ \CC^n$, then we know that $(\Delta_{\om_u}-\mu) u_{\gamma} = F_{\gamma}$, so that Harnack inequality (which clearly also holds for the operator $\Delta_{\om_u}-\mu$) yields as in \cite[Theorem 8.22]{Gilb} the Hölder continuity of $u_{\gamma}$ \--- here we do not need Evans-Krylov's argument because $u_{\gamma}$ satisfies an elliptic \textit{equation}.
\end{rema}

\subsection{The general case of real coefficients: end of the proof of Theorem B}

In this section, we use a density argument to obtain Theorem B without the rationality assumptions on the angles. 

\noindent
So we start from a Monge-Ampère equation:
\[(\om+\ddc \vp) ^n = \frac{e^{\mu \vp+f}dV}{\prod_{j=1}^d |s_j|^{2(1-\beta_j)}}\] 
where the $\beta_j$ are now real numbers in $(0,1)$. We approximate the angles $\beta_j$ by rational numbers $r_{j,k}:=\frac{p_{j,k}}{q_{j,k}}$, and we look at the (renormalized if $\mu=0$) equation
\[(\om+\ddc \vp_k) ^n = \frac{e^{\mu \vp+f}dV}{\prod_{j=1}^d |s_j|^{2(1-r_{j,k})}}\] 
By \cite{DDGHKZ}, the $\mathscr C^ {\alpha}$ norm of $\vp_k$ is uniformly bounded (hence so is its $\cab$ norm), and we know from \cite{Kolo2} that $(\vp_k)_k$ converges toward $\vp$. By Remark \ref{rem:unif}, we have a uniform laplacian estimate; namely $\ddc \vp_k$ is uniformly quasi-isometric to the model cone metric with angles $2\pi r_{j,k}$ along $(z_j=0)$, and in particular $\vp_k$ converges to $\vp$ in $\mathscr C^{\infty}_{\rm loc}(X\setminus \Delta)$. If we show that $\vp_k$ satisfies a uniform $\mathscr{C}^ {2, \alpha, r_k}$ estimate (say near each point lying on $\Delta$), then we will be done.

But we observed (cf Remark \ref{rem:har}) that Harnack inequality was valid uniformly in $k$. Therefore, if we put together the uniform estimates mentionned above with Lemma \ref{lem:func} and inequality \eqref{ine:fin}, we get:
\[||\vp_k||_{\mathscr C^{2,\alpha, r_k}} \le C\]
As $\vp_k$ converges smoothly to $\vp$ on the compact sets of $\DD$, this shows that $\vp \in \cdab$ by the very definition of these functional spaces.


\bibliographystyle{smfalpha}
\bibliography{biblio}

\end{document}